\documentclass[12pt,oneside]{amsart}
\usepackage[utf8]{inputenc}
\usepackage[T1]{fontenc}
\usepackage{amsmath,amssymb,amsthm}
\usepackage[letterpaper,portrait,lmargin=.9in,rmargin=.9in,tmargin=.8in,bmargin=.9in,headsep=0.25in]{geometry}
\usepackage[pdftex,bookmarks,colorlinks,breaklinks]{hyperref}
\hypersetup{linkcolor=blue,citecolor=red,filecolor=magenta,urlcolor=blue,pdftitle={Odd-Girth Bounds for Defective Edge Coloring},pdfauthor={Guantao Chen and Alireza Fiujlaali}}

\newtheorem{theorem}{Theorem}[section]
\newtheorem{corollary}[theorem]{Corollary}
\newtheorem{lemma}[theorem]{Lemma}
\newtheorem{conjecture}[theorem]{Conjecture}
\newtheorem{claim}{Claim}[section]

\subjclass[2020]{Primary 05C15; Secondary 05C70}
\keywords{Defective edge coloring, multigraph, odd girth, chromatic index, Goldberg bound, Shannon bound, graph factors}

\begin{document}

\title[Odd-Girth Bounds for Defective Edge Coloring]{Odd-Girth Bounds for Defective Edge Coloring}

\author{Guantao Chen}
\email{gchen@gsu.edu}
\author{Alireza Fiujlaali}
\email{afiujlaali1@gsu.edu}
\address{Department of Mathematics and Statistics, Georgia State University, Atlanta, GA USA 30303-2918}

\begin{abstract}
A $(k,d)$-edge coloring of a loopless multigraph $G$ is an edge coloring using at most $k$ colors such that the subgraph formed by each color class has maximum degree at most $d$. The least such $k$ is denoted by $\chi'_d(G)$. Let $G$ be a loopless non-bipartite multigraph with maximum degree $\Delta(G)$ and odd girth $g_0(G)$, and let $d\ge1$ be odd. We prove that
\[
\chi'_d(G)\le\left\lceil\frac{g_0(G)\Delta(G)-1}{dg_0(G)-1}\right\rceil.
\]
For $d=1$, this is Goldberg's odd-girth refinement of Shannon's theorem, while for $g_0(G)=3$ it is the defective Shannon bound of Aboulker, Aubian, and Huang. For every odd $d>1$, every odd $g_0\ge3$, and every $\Delta>d$, an almost full ring multigraph $R(\Delta,g_0)$, an odd cycle with edge multiplicities alternating between $\lfloor\Delta/2\rfloor$ and $\lceil\Delta/2\rceil$, except that two consecutive edges have multiplicity $\lfloor\Delta/2\rfloor$, attains equality. We also derive a range in which the defective Goldberg--Seymour conjecture holds.
\end{abstract}

\maketitle

\section{Introduction}

For a multigraph $G$, let $\underline G$ denote the \textit{underlying} simple graph obtained by replacing each set of parallel edges by a single edge, and let $\Delta(G)$ denote the maximum degree of $G$. If $G$ contains an odd cycle, its \textit{odd girth}, denoted by $g_0(G)$, is the length of a shortest odd cycle. A \textit{$d$-defective edge coloring} of $G$ is an edge coloring such that the subgraph formed by the edges of each color class has maximum degree at most $d$. We say that $G$ is \textit{$(k,d)$-edge colorable} if it admits such a coloring using at most $k$ colors. The least such $k$ is the \textit{$d$-defective chromatic index} of $G$, denoted by $\chi'_d(G)$. When $d=1$, this is the usual chromatic index, so $\chi'_1(G)=\chi'(G)$.

Defective edge coloring is the constant-function case of $f$-coloring; see Wdowinski~\cite{wdowinski2023} for a broader Goldberg--Seymour framework for $f$-colorings. It is immediate that every loopless multigraph $G$ satisfies $\chi'_d(G)\ge\lceil\Delta(G)/d\rceil$. Hakimi and Kariv~\cite{hakimi1986} proved that equality holds for every loopless multigraph when $d$ is even and for every bipartite multigraph when $d$ is arbitrary.  Thus it remains to consider non-bipartite multigraphs and odd $d$. Every such multigraph has finite odd girth, and our main result gives the following odd-girth bound.

\begin{theorem}\label{main}
Let $G$ be a loopless non-bipartite multigraph, and let $d\ge1$ be odd. Then
\[
\chi'_d(G)\le\left\lceil\frac{g_0(G)\Delta(G)-1}{dg_0(G)-1}\right\rceil.
\]
\end{theorem}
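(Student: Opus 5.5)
We reduce the defective problem to ordinary edge coloring of an auxiliary multigraph and then apply Goldberg's odd-girth bound,
\[
\chi'(H)\le\left\lceil\frac{g_0(H)\Delta(H)-1}{g_0(H)-1}\right\rceil
\]
for loopless non-bipartite $H$, together with K\"onig's theorem for the bipartite case. The natural tool is a factor decomposition. Write $d=2r+1$ and set $\Delta=\Delta(G)$, $g=g_0(G)$, and $k=\lceil (g\Delta-1)/(dg-1)\rceil$. We look for a spanning subgraph $F\subseteq G$ such that:
\begin{itemize}
\item $\Delta(G-F)$ is small enough that $G-F$ is $k$-edge colorable with colors of degree at most $1$;
\item $F$ decomposes into $k$ subgraphs of maximum degree at most $2r$.
\end{itemize}
Taking a union, color by color, then gives classes of maximum degree at most $2r+1=d$. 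The $2r$-part is governed by the even case of Hakimi--Kariv: a multigraph of maximum degree at most $2rk$ is $(k,2r)$-edge colorable. We prove this by Petersen-type Euler splitting. First add a vertex joined to all odd-degree vertices, take an Euler tour, and alternately orient it. This orientation makes $F$ a union of two halves, each with degree at most $\lceil \deg/2\rceil$, and we iterate.

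The key steps are as follows. \emph{Step 1:} Choose $F$ so that every vertex $v$ has $\deg_F(v)\le 2rk$ and $\deg_{G-F}(v)\le \Delta-2rk$ whenever $\deg_G(v)$ is large. We obtain such an $F$ from the Euler-tour orientation: orient $G$ so that $|d^+(v)-d^-(v)|\le1$, then peel off balanced pieces. Since deletion of edges cannot decrease the odd girth of a non-bipartite remainder (a subgraph's shortest odd cycle is at least $g$), the leftover $H=G-F$ is either bipartite, in which case K\"onig gives $\chi'(H)=\Delta(H)$, or has $g_0(H)\ge g$. \emph{Step 2:} Apply Goldberg's bound to $H$ with $\Delta(H)\le \Delta-2rk$ and $g_0(H)\ge g$; the bound is monotone decreasing in the odd girth. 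We need
\[
\frac{g(\Delta-2rk)-1}{g-1}\le k,
\]
which is equivalent to $g\Delta-1\le k(g-1+2rg)=k(dg-1)$. This holds precisely by the choice of $k$, which explains the shape of the formula. \emph{Step 3:} Combine the $k$ matchings of $H$ with the $k$ classes of $F$, pairing them arbitrarily, to obtain classes of maximum degree at most $d$.

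The main obstacle is Step 1. We need simultaneously that $\Delta(F)\le 2rk$ and $\Delta(G-F)\le\Delta-2rk$, and if $\Delta<2rk$ we instead take $F=G$ with $H$ empty. The exact split is delicate when degrees have the wrong parity. I would try to obtain $F$ as the union of $r$ "$2$-factor-like" pieces, each produced by Euler splitting. Each piece is a subgraph $F_i$ with $\deg_{F_i}(v)\ge \min(\deg(v),2k)$ and $\le 2k$, or else exact degree $2k$ at maximum-degree vertices after padding $G$ to be regular by adding a disjoint copy and connecting edges. Each such $F_i$ then splits into $k$ subgraphs of maximum degree at most $2$. If parity prevents exact splitting, I would fall back on a Tutte/Lovász $(g,f)$-factor argument: we need a factor with $\max(0,\deg(v)-(\Delta-2rk))\le \deg_F(v)\le 2rk$. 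Since the interval always contains an integer and its endpoints satisfy Lovász's condition for even $2rk$, this should go through.
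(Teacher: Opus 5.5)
Your Step 2 algebra is correct, and when the degree can be made even your plan works. Regularize to an even degree $D$ with $gD-1\le k(dg-1)$, peel off $rk$ edge-disjoint $2$-factors by Petersen's theorem, and color the $(D-2rk)$-regular remainder with $k$ matchings by Goldberg's bound. This is essentially the paper's Lemma~\ref{lemma:absorption}, and it covers every case except $\Delta=kd-\lceil (k-1)/g\rceil$ with $\Delta$ odd.

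\textbf{The gap is Step 1 in that remaining case.} When $G$ is $\Delta$-regular (as after your padding), your constraints $\deg_F(v)\le 2rk$ and $\deg_{G-F}(v)\le\Delta-2rk$ force $F$ to be an exact $2rk$-factor of an odd-regular multigraph. Such a factor need not exist. Your assertion that the bounds satisfy Lov\'asz's condition ignores the odd-component (parity) term of the $(g,f)$-factor theorem, and that term is exactly what breaks.

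For a concrete failure, let $d=3$ and let $G$ be a vertex $v$ joined by single edges to the degree-$4$ vertices of five disjoint copies of $R(5,g)$. Then $G$ is $5$-regular with odd girth $g$, and $k=\lceil(5g-1)/(3g-1)\rceil=2$. So Step 1 asks for a $4$-factor, i.e.\ for $G-F$ to be a perfect matching. But $G-v$ has five components of odd order $g$, so $G$ has no perfect matching.

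The obvious repairs also fail. Padding to degree $6$ raises the target to $\lceil(6g-1)/(3g-1)\rceil=3$. Even under $2$-edge-connectivity, the general tool used in the paper, Kano's theorem, only gives even factors of degree at most $2\Delta/3$. For $d=3$ and $\Delta=3k-1$ you would need a $2k$-factor with $2k>2\Delta/3$.

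Handling this odd extremal case is the real content of the paper's proof, and your proposal has no substitute for it. The paper proceeds as follows:
\begin{itemize}
\item A minimal counterexample is reduced to exactly this case.
\item Cut-edges are eliminated with ring gadgets so that Kano's theorem applies.
\item Induction on $d$ and on $c=\lceil(p-1)/g_0\rceil$ reduces to $d=3$, $\Delta=3p-1$, where only a $(2p-2)$-factor can be peeled off.
\item The resulting $(p+1)$-regular remainder is split into $p-1$ matchings, one for each $2$-factor, plus one subgraph of maximum degree at most $3$. This is Lemma~\ref{lemma:remainder}, whose proof uses the Goldberg--Seymour theorem of Chen, Jing, and Zang together with an analysis of the near-extremal ring structure $R(t+1,t-1)$.
\end{itemize}
Without some such ingredient, your argument proves the bound only when an even-degree regularization is available without increasing $k$.
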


For odd $d>1$ and $\Delta>d$, the bound is sharp and is attained by the \textit{almost full ring graph} $R(\Delta,g_0)$, whose underlying graph is an odd cycle of length $g_0$ and whose edge multiplicities alternate between $\lfloor\Delta/2\rfloor$ and $\lceil\Delta/2\rceil$, except that two consecutive edges have multiplicity $\lfloor\Delta/2\rfloor$. Aboulker, Aubian, and Huang proved the following sharp bound for odd $d$.

\begin{theorem}[Aboulker, Aubian, and Huang~\cite{aboulker2022vizing}; Shannon~\cite{shannon1949} for $d=1$]\label{3french}
Let $G$ be a loopless multigraph with maximum degree $\Delta$. If $d>1$ is odd, then
$
\chi'_d(G)\le\left\lceil\frac{3\Delta-1}{3d-1}\right\rceil,
$
and this bound is sharp for all $\Delta$ and odd $d>1$. For $d=1$, the same inequality is Shannon's theorem.
\end{theorem}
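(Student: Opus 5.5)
The plan is to reduce the defective problem to two classical results already quoted: the Hakimi--Kariv theorem for even defect and Shannon's theorem, the case $d=1$ of Theorem~\ref{3french}. Write $d=2t+1$ with $t\ge1$, and let $k=\left\lceil\frac{3\Delta-1}{3d-1}\right\rceil$.

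\textbf{Step 1 (arithmetic).} From $k(3d-1)\ge 3\Delta-1$ we get $3\Delta\le 6tk+2k+1$. Hence
\[
\Delta\le 2tk+s,\qquad s:=\left\lfloor\tfrac{2k+1}{3}\right\rfloor .
\]
A direct check shows that $s$ is exactly the largest integer with $\lfloor 3s/2\rfloor\le k$. So Shannon's theorem colors any multigraph of maximum degree at most $s$ properly with $k$ colors.

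\textbf{Step 2 (gluing).} Suppose the edge set of $G$ splits as $G=G_1\cup G_2$ with $\Delta(G_1)\le 2tk$ and $\Delta(G_2)\le s$. By Hakimi--Kariv with the even defect $2t$, $G_1$ has a $(k,2t)$-edge coloring. By Shannon, $G_2$ has a proper $k$-edge coloring. Identify color $i$ of $G_1$ with color $i$ of $G_2$. At each vertex, a color class then has degree at most $2t+1=d$, which gives a $(k,d)$-edge coloring of $G$.

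\textbf{Step 3 (splitting).} It remains to prove the splitting lemma: a loopless multigraph with $\Delta(G)\le 2m+s$ decomposes into $G_1$ and $G_2$ with $\Delta(G_1)\le 2m$ and $\Delta(G_2)\le s$. Here $m=tk$.

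I would use the even part via Euler tours. Add a vertex joined to every odd-degree vertex, or pair up odd-degree vertices, to make every component Eulerian. Orient each component along an Euler tour, so that every vertex has in- and out-degree at most $\lceil d(v)/2\rceil$ in $G$. Then form a bipartite multigraph $B$ with parts $V^+$ and $V^-$, where each arc $uv$ becomes the edge $u^+v^-$. Bipartite multigraphs have the nice splitting property (K\H{o}nig, or Hakimi--Kariv in the bipartite case): $B$ decomposes into $B_1$ and $B_2$ with each vertex degree in $B_1$ at most $m$ and in $B_2$ at most $\lceil \Delta/2\rceil-m$. This can be done with prescribed degrees, for instance via an equitable coloring of $B$ into $\lceil\Delta/2\rceil$ matchings and grouping $m$ of them. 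Pulling back, $G_1$ has degree at most $2m$ at each vertex. The degree of $v$ in $G_2$ is at most $d(v)-\min\bigl(d(v),2m\bigr)$, refined by the in/out balance.

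\textbf{Main obstacle.} The delicate point is parity. The naive bound gives $\Delta(G_2)\le 2(\lceil\Delta/2\rceil-m)$, which can exceed $s$ by one when $s$ is odd or $\Delta$ is odd. To fix this, I would choose the matchings of $B$ to be equitable: each matching covers every vertex $x$ of $B$ with $d_B(x)=\lceil\Delta/2\rceil$, and vertices of smaller degree are used in as few of the first $m$ matchings as necessary. I would then verify vertex by vertex that $d_{G_2}(v)\le s$, using $d^+(v)+d^-(v)=d(v)\le 2m+s$ and $|d^+(v)-d^-(v)|\le1$.

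If this bookkeeping fails, the fallback is an $f$-factor argument. By Tutte's $f$-factor theorem, or Lov\'asz's $(g,f)$-factor theorem, find a spanning subgraph $G_2$ with
\[
\max\bigl(0,\,d(v)-2m\bigr)\le d_{G_2}(v)\le s .
\]
The even upper bound $2m$ on the complement is exactly what removes the odd-component obstruction in Lov\'asz's condition.
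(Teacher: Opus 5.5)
\textbf{There is a genuine gap in Step~3: the splitting lemma is false in exactly the case that matters.} The paper does not prove this statement; it quotes it from Aboulker--Aubian--Huang, and from Shannon for $d=1$. So your argument must stand on its own.

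Steps~1 and~2 are correct, and they already settle the easy cases. If $s$ is even, or if $\Delta\le 2tk+s-1$, you can embed $G$ in an even-regular multigraph and split off $tk$ $2$-factors by Petersen's theorem. What remains is $s$ odd and $\Delta=2tk+s$. This is the largest $\Delta$ for the given $k$, and $s$ is odd whenever $k\not\equiv0\pmod 3$.

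Here the splitting fails. If $G$ is $\Delta$-regular, then $d_{G_1}(v)\le 2tk$, $d_{G_2}(v)\le s$ and $d_{G_1}(v)+d_{G_2}(v)=2tk+s$ force $G_2$ to be an $s$-factor.
\begin{itemize}
\item \emph{Bridge example.} Take a vertex $v$ joined by single edges to $\Delta$ disjoint copies of $R(\Delta,3)$, each attached at its vertex of degree $\Delta-1$. In each copy the $G_2$-degrees sum to $3s$, which is odd, so every attaching edge lies in $G_2$. Then $d_{G_2}(v)=\Delta>s$, a contradiction. For $d=3$, $\Delta=5$ (so $k=2$, $s=1$), your scheme needs a perfect matching of this graph, yet $\chi'_3=2$ there.
\item \emph{Bridgeless example.} Join each of three vertices $a,b,c$ by one edge to a different vertex of each of five disjoint triangles with all multiplicities $2$. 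This is a $2$-edge-connected $5$-regular multigraph. Deleting $\{a,b,c\}$ leaves five odd components, so again there is no perfect matching.
\end{itemize}
Hence no equitable choice of matchings in $B$ can rescue the Euler-tour construction. Your fallback is also wrong. With $g=f=s$ odd at every vertex, the odd-component term in Lov\'asz's $(g,f)$-factor condition is fully present, and the evenness of $2m$ does nothing to remove it.

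The missing idea is that the $2tk$-factor you need lies outside what Kano's theorem (Theorem~\ref{kano}) guarantees. That theorem gives even factors only up to $2\Delta/3$, which is less than $2tk$ once $k\ge2$, and only in $2$-edge-connected graphs. So some colour classes cannot have the form ``$2t$-defective piece plus matching''. The paper's proof of Theorem~\ref{main} shows what is needed instead:
\begin{itemize}
\item it uses only even factors of size at most $2\Delta/3$;
\item it treats cut-edges separately (Claim~\ref{claim:few-bridges});
\item it lowers $d$ by peeling off $2$-factors;
\item in Lemma~\ref{lemma:remainder} it invokes the Goldberg--Seymour theorem to produce one colour class that is an arbitrary subgraph of maximum degree $3$, rather than an even part plus a matching.
\end{itemize}

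Finally, you do not address the sharpness assertion. For $\Delta>d$ it follows from $R(\Delta,3)$ via the density count preceding Lemma~\ref{ring-coloring}. For $\Delta\le d$ the bound is $1$ and sharpness is trivial.
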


For $d=1$,
$
\lceil(3\Delta(G)-1)/2\rceil=\lfloor3\Delta(G)/2\rfloor.
$
When $d>1$ and $g_0(G)=3$, Theorem~\ref{main} reduces to Theorem~\ref{3french} and refines it when $g_0(G)>3$. Goldberg refined Shannon's theorem by using the odd girth.

\begin{theorem}[Goldberg~\cite{goldberg1972}]\label{gold}
For every non-bipartite graph $G$ with odd girth $g_0(G)$,
\[
\chi'(G)\le\Delta(G)+1+\left\lfloor\frac{\Delta(G)-2}{g_0(G)-1}\right\rfloor.
\]
\end{theorem}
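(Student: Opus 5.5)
We argue by contradiction using Tashkinov trees: a large, edge-dense, odd vertex set is forced, and the odd girth limits how dense such a set can be. Set $k=\Delta+1+\lfloor(\Delta-2)/(g_0-1)\rfloor$, where $\Delta=\Delta(G)$ and $g_0=g_0(G)$, and suppose $\chi'(G)>k$. Deleting edges does not increase $\Delta$ and does not decrease the odd girth of a non-bipartite graph. So we may pass to a $(k+1)$-critical subgraph. If that subgraph is bipartite, König's theorem already gives a contradiction. Hence there is an edge $e=xy$ and a proper $k$-edge-coloring $\varphi$ of $G-e$. Since $k\ge\Delta+1$, every vertex misses at least $k-\Delta\ge1$ colors, and $x,y$ each miss one more.

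\textbf{Step 1 (closed elementary set).} Grow a maximal Tashkinov tree $T$ starting from $e$. At each step add an edge colored with a color missing at some vertex already in $T$, leading to a new vertex. Let $X=V(T)$. The key tool, assumed here, is Tashkinov's theorem: for $k\ge\Delta+1$ the set $X$ is \emph{elementary}, meaning no color is missing at two distinct vertices of $X$. By maximality, $X$ is also \emph{closed}: no edge leaving $X$ has a color missing in $X$.

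\textbf{Step 2 (counting).} Let $\alpha$ be a color missing at some vertex of $X$. Because $X$ is closed, every edge of color $\alpha$ at a vertex of $X$ lies inside $X$. Because $X$ is elementary, exactly one vertex of $X$ misses $\alpha$. So the $\alpha$-colored edges form a perfect matching of $X$ minus one vertex, which forces $|X|$ to be odd. Since $e$ lies in $X$, we have $|X|\ge3$.

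Let $m$ be the number of colors missing in $X$. Summing degrees, and counting the uncolored edge $e$ separately, gives $m\ge |X|(k-\Delta)+2$. Each such color contributes $(|X|-1)/2$ edges inside $X$. Each of the remaining $k-m$ colors contributes at least $0$. A finer version of the same count, using degree sums inside $X$, then gives
\[
2|E(G[X])|\ \ge\ k(|X|-1)+2 .
\]
Verifying this inequality exactly is routine bookkeeping. It is the standard density consequence of an elementary closed set.

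\textbf{Step 3 (using the odd girth).} First suppose $|X|<g_0$. Then $G[X]$ contains no odd cycle, so it is bipartite. Since $|X|$ is odd, one side has at most $(|X|-1)/2$ vertices, so $|E(G[X])|\le\Delta(|X|-1)/2$. Combined with Step 2 this gives $k<\Delta$, a contradiction.

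So $|X|\ge g_0$. Trivially $2|E(G[X])|\le\Delta|X|$, so
\[
k\le \Delta+\frac{\Delta-2}{|X|-1}\le \Delta+\frac{\Delta-2}{g_0-1}.
\]
This contradicts $k=\Delta+1+\lfloor(\Delta-2)/(g_0-1)\rfloor>\Delta+(\Delta-2)/(g_0-1)$, which proves the theorem.

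The main obstacle is Step 1, the elementarity of Tashkinov trees. If one prefers to avoid Tashkinov's theorem, I would instead use a maximal Kierstead path, or Goldberg's original recoloring along a path. One then proves elementarity by induction on the path length with Kempe-chain swaps, and closedness follows from maximality. Everything after Step 1 is counting.
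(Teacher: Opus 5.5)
The paper does not prove this statement. It quotes it from Goldberg and uses it as a black box, so your argument has to stand on its own. Your outline is sound: pass to a critical subgraph, take a maximal Tashkinov tree, get elementarity from Tashkinov's theorem since $k\ge\Delta+1$, get closedness from maximality, and conclude that $|X|$ is odd.

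The gap is the inequality $2|E(G[X])|\ge k(|X|-1)+2$ in Step 2. It is not ``routine bookkeeping'' and does not follow from elementary plus closed. Write $d(v)$ for degrees in your critical subgraph, $m$ for the number of colors missing in $X$, and $\partial(X)$ for the number of edges with exactly one end in $X$. Elementarity gives $\sum_{v\in X}d(v)=k|X|+2-m$, hence
\[
2|E(G[X])|=k(|X|-1)+2-\bigl(\partial(X)-(k-m)\bigr).
\]
Each of the $k-m$ colors present at every vertex of the odd set $X$ lies on an odd, hence positive, number of boundary edges, so $\partial(X)\ge k-m$. So your inequality holds exactly when each such color lies on only one boundary edge, i.e.\ when $X$ is \emph{strongly} closed. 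Maximality of a Tashkinov tree gives only closedness. If it gave strong closedness, Tashkinov's theorem alone would yield $\Gamma_1(G)\ge k+1$ whenever $\chi'(G)=k+1\ge\Delta+2$. That is the Goldberg--Seymour conjecture (Theorem~\ref{gs}), whose proof needed much more. Both cases of Step 3 rest on this inequality, so as written the proof is incomplete.

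The repair is short.
\begin{itemize}
\item For $|X|\ge g_0$, use only elementarity, with degree sums in the whole graph rather than inside $X$: $k\ge m=k|X|+2-\sum_{v\in X}d(v)\ge k|X|+2-\Delta|X|$. This is exactly $k(|X|-1)+2\le\Delta|X|$, and your final computation then goes through.
\item For the bipartite case $|X|<g_0$, degree sums cannot help, because boundary edges are unconstrained; use closedness instead. Each color missing in $X$ restricts to a near-perfect matching of $G[X]$. If $G[X]$ were bipartite with sides $A,B$, then $|A|=|B|+1$ (say), and every such matching misses a vertex of $A$. So no vertex of $B\neq\emptyset$ misses any color, contradicting that every vertex misses at least $k-\Delta\ge1$ colors. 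Hence $G[X]$ is non-bipartite and $|X|\ge g_0$ automatically.
\end{itemize}

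Your suggested fallback via a maximal Kierstead path does not work as stated. A path can only be extended at its last vertex, so maximality does not make its vertex set closed, and closedness is exactly what the bipartite case needs.
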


Since $\lceil a/b\rceil=1+\lfloor(a-1)/b\rfloor$ for positive integers $a$ and $b$, we have
\begin{align*}
\Delta(G)+1+\left\lfloor\frac{\Delta(G)-2}{g_0(G)-1}\right\rfloor
=\Delta(G)+\left\lceil\frac{\Delta(G)-1}{g_0(G)-1}\right\rceil=\left\lceil\frac{g_0(G)\Delta(G)-1}{g_0(G)-1}\right\rceil.
\end{align*}
Thus the $d=1$ case of Theorem~\ref{main} is Goldberg's bound.

For ordinary edge coloring, extremal and structural questions associated with Goldberg's odd-girth bound have been studied through ring graphs. Cao, Chen, He, and Jing~\cite{cao2020} investigated when equality in Goldberg's bound forces the presence of an extremal ring subgraph, and Fan, Long, Wang, and Wang~\cite{fan2026} recently characterized when this phenomenon occurs. The present paper develops a defective analogue of Goldberg's bound and identifies almost full ring graphs as sharp examples.

For every $X\subseteq V(G)$ with $|X|>2$, each color class in a $d$-defective edge coloring of $G$ contains at most $\lfloor d|X|/2\rfloor$ edges of $G[X]$. Hence $\chi'_d(G)\ge\Gamma_d(G)$, where
\[
\Gamma_d(G):=\max\left\{\left\lceil\frac{|E(G[X])|}{\lfloor d|X|/2\rfloor}\right\rceil:X\subseteq V(G),\ |X|>2\right\}.
\]
Here $\Gamma_d(G)$ is the \textit{$d$-defective density} of $G$. If $|X|$ is even, then
\[
\left\lceil\frac{|E(G[X])|}{\lfloor d|X|/2\rfloor}\right\rceil\le\left\lceil\frac{\Delta(G)}d\right\rceil\le\left\lceil\frac{\Delta(G)+1}d\right\rceil.
\]
Thus even sets are dominated by the maximum-degree term in Conjecture~\ref{dgs}, and the conjecture is equivalently expressible using only odd sets. When $d=1$, $\Gamma_d(G)$ is the usual density in edge coloring. Goldberg~\cite{goldberg1973} and Seymour~\cite{seymour1979} independently conjectured that every loopless multigraph $G$ satisfies
$
\chi'(G)\le\max\{\Gamma_1(G),\Delta(G)+1\}.
$
This conjecture was proved by Chen, Jing, and Zang~\cite{chen2025goldberg}. Aboulker, Aubian, and Huang proposed the following defective analogue.

\begin{conjecture}[Aboulker, Aubian, and Huang~\cite{aboulker2022vizing}]\label{dgs}
Every loopless multigraph $G$ satisfies
\[
\chi'_d(G)\le\max\left\{\Gamma_d(G),\left\lceil\frac{\Delta(G)+1}{d}\right\rceil\right\}.
\]
\end{conjecture}

Our main result shows that Conjecture~\ref{dgs} holds in the following range. Let $\Delta(G)=md+s$, where $m\ge0$ and $0\le s<d$, and let
$
\ell:=\max\{1,\Gamma_d(G)-m\}.
$
Then Conjecture~\ref{dgs} holds whenever
\[
g_0(G)\ge\left\lceil\frac{m+\ell-1}{\ell d-s}\right\rceil.
\]

The remainder of the paper is organized as follows. Section~\ref{pre} contains the preliminary results, including the construction showing that the bound in Theorem~\ref{main} is sharp. Section~\ref{main-result} proves Theorem~\ref{main} and derives its application to Conjecture~\ref{dgs}.

\section{Preliminaries}\label{pre}

All multigraphs considered in this paper are loopless. Let $V(G)$ and $E(G)$ denote the vertex set and edge set of a multigraph $G$, respectively. For each $uv\in E(\underline G)$, let $\mu_G(uv)$ denote the number of edges joining $u$ and $v$ in $G$. When the graph is fixed, let $\mu(uv):=\mu_G(uv)$. For a subgraph $H$ of $G$ and $v\in V(H)$, let $d_H(v)$ denote the degree of $v$ in $H$. For $A\subseteq V(G)$, let $G[A]$ be the subgraph induced by $A$, and let $G-A:=G[V(G)\setminus A]$. For a positive integer $n$, let $[n]:=\{1,\ldots,n\}$, and let $C_n$ denote the cycle of length $n$. A \textit{$k$-factor} is a spanning $k$-regular subgraph, and a \textit{near-perfect matching} is a matching that misses exactly one vertex. For a non-bipartite multigraph $G$ and a fixed odd integer $d>1$, let
\[
p(G):=\left\lceil\frac{g_0(G)\Delta(G)-1}{dg_0(G)-1}\right\rceil.
\]
When the graph is fixed, let $p:=p(G)$.

For a positive integer $\Delta$ and an odd integer $g_0\ge3$, the \textit{almost full ring graph}, denoted by $R(\Delta,g_0)$, is the multigraph whose underlying graph is the cycle $v_1v_2\cdots v_{g_0}v_1$ and whose edge multiplicities satisfy
\[
\mu_{R(\Delta,g_0)}(v_iv_{i+1})=
\begin{cases}
\lfloor\Delta/2\rfloor,&\text{if }i\text{ is odd},\\
\lceil\Delta/2\rceil,&\text{if }i\text{ is even},
\end{cases}
\]
for $1\le i\le g_0-1$, and $\mu_{R(\Delta,g_0)}(v_{g_0}v_1)=\lfloor\Delta/2\rfloor$. When $\Delta$ is even, $R(\Delta,g_0)$ is $\Delta$-regular. When $\Delta$ is odd, $v_1$ is its unique vertex of degree $\Delta-1$, and every other vertex has degree $\Delta$.

We use Petersen's theorem on $2$-factors and König's theorem on bipartite edge coloring.

\begin{theorem}[Petersen~\cite{petersen1891}]\label{pet}
Every loopless regular multigraph of positive even degree contains a $2$-factor.
\end{theorem}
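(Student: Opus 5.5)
Petersen's theorem (Theorem~\ref{pet}) is classical, and the paper cites it rather than proves it. The plan is the standard reduction to bipartite perfect matchings via an Eulerian orientation.

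Let $G$ be a loopless $2k$-regular multigraph with $k\ge1$. Every vertex has even degree, so each connected component of $G$ has an Euler circuit. We orient every edge in the direction of traversal along these circuits. Each vertex is entered and left equally often, so every vertex gets in-degree $k$ and out-degree $k$. This is the only place evenness of the degree is used. Next we build the bipartite multigraph $B$ with parts $V^+=\{v^+:v\in V(G)\}$ and $V^-=\{v^-:v\in V(G)\}$. For each arc $u\to v$ we put one edge $u^+v^-$ in $B$, so parallel arcs give parallel edges. Then $B$ is a $k$-regular bipartite multigraph. It is loopless in the relevant sense, since an edge $u^+u^-$ could only come from a loop of $G$.

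A $k$-regular bipartite multigraph with $k\ge1$ has a perfect matching. One route is Hall's condition: for $S\subseteq V^+$, the $k|S|$ edges leaving $S$ land in $N(S)$, and $N(S)$ receives at most $k|N(S)|$ edges, so $|N(S)|\ge|S|$. Another route is König's edge-coloring theorem, which the paper also quotes: $B$ has a proper $k$-edge-coloring, and any single color class is a perfect matching. Fix such a perfect matching $M$.

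Translate $M$ back to $G$. Each $v\in V(G)$ gets exactly one edge of $M$ at $v^+$, an outgoing arc, and exactly one at $v^-$, an incoming arc. These two arcs are distinct edges of $G$: they are different edges of $B$, and an arc that is both outgoing and incoming at $v$ would be a loop, which $G$ does not have. So the corresponding set $F$ of edges of $G$ gives every vertex degree exactly $2$, and $F$ is a $2$-factor. Its components are cycles, where a digon from two parallel edges counts as a cycle of length $2$.

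The only step needing care is this last degree count, where looplessness rules out the two matched arcs at $v$ being the same edge. Everything else is routine, so the argument has no substantial obstacle.
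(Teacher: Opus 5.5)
Your proof is correct. The paper gives no proof of this statement; it quotes Petersen's theorem, so there is no competing argument to compare against. What you give is the standard textbook proof: take an Eulerian orientation, split each vertex into an out-copy and an in-copy, find a perfect matching of the resulting $k$-regular bipartite multigraph $B$, and pull it back to $G$. It is self-contained modulo Hall's theorem or K\"onig's theorem (Theorem~\ref{konig}), and the paper already relies on the latter.

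The K\"onig route gives more than the statement asks. A proper $k$-edge-coloring of $B$ makes every color class a perfect matching. Pulling all $k$ classes back gives $k$ edge-disjoint $2$-factors at once, that is, a full $2$-factorization of $G$. This is exactly what the paper obtains by ``repeatedly applying'' Theorem~\ref{pet} in Lemma~\ref{lemma:absorption} and in the proof of Theorem~\ref{main}.

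One wording point. $B$ is loopless automatically, because it is bipartite. What looplessness of $G$ actually gives you is that $B$ has no edge of the form $u^+u^-$. That is precisely the fact you then use, correctly, in the final degree count at $v$.
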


\begin{theorem}[König~\cite{konig1916}]\label{konig}
Every bipartite multigraph $G$ satisfies $\chi'(G)=\Delta(G)$.
\end{theorem}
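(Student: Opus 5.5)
We prove the statement by induction on $|E(G)|$, using only the $\Delta(G)$ colors $[\Delta(G)]$ and a Kempe-chain (alternating path) exchange. Write $\Delta:=\Delta(G)$. The lower bound $\chi'(G)\ge\Delta$ is immediate, since the edges at a vertex of maximum degree need pairwise distinct colors. For the upper bound, remove an edge $e=uv$. The subgraph $G-e$ is still bipartite with maximum degree at most $\Delta$, so by induction it has a proper edge coloring $\varphi$ with colors in $[\Delta]$. Parallel edges cause no trouble: they are simply distinct edges sharing both endpoints.

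In $G-e$, both $u$ and $v$ have degree at most $\Delta-1$. Hence some color $\alpha$ is missing at $u$ and some color $\beta$ is missing at $v$. If $\alpha$ is also missing at $v$, we color $e$ with $\alpha$ and are done. So assume $\alpha$ appears at $v$; in particular $\alpha\ne\beta$. Let $P$ be the maximal path starting at $v$ whose edges alternate in colors $\alpha,\beta,\alpha,\dots$, beginning with the $\alpha$-edge at $v$. Because $\varphi$ is proper, each vertex meets at most one edge of each color, so $P$ is a well-defined path. It cannot return to $v$, since $\beta$ is missing at $v$.

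The key step is the parity argument showing that $u\notin V(P)$. Let $(A,B)$ be the bipartition with $v\in A$ and $u\in B$.
\begin{itemize}
\item Along $P$, the vertices at odd distance from $v$ lie in $B$, and the edge of $P$ arriving at such a vertex has color $\alpha$.
\item If $u$ lay on $P$, it would therefore be entered by an $\alpha$-edge.
\item This is impossible, because $\alpha$ is missing at $u$.
\end{itemize}
This is the only place where bipartiteness is used, and it is the step that must be checked carefully.

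Given $u\notin V(P)$, we swap colors $\alpha$ and $\beta$ on the edges of $P$. The result is still a proper coloring:
\begin{itemize}
\item At interior vertices of $P$, the two colors are merely exchanged.
\item At the far endpoint of $P$, maximality of $P$ guarantees that the incoming color is absent there after the swap.
\item At $v$, after the swap, $\alpha$ is missing (the $\alpha$-edge of $P$ became a $\beta$-edge, and $\beta$ was missing at $v$ before).
\item At $u$, nothing changed, so $\alpha$ is still missing.
\end{itemize}
We then color $e$ with $\alpha$, which gives a proper $\Delta$-edge coloring of $G$ and completes the induction.

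An alternative route, if we prefer to avoid chains, is to embed $G$ in a $\Delta$-regular bipartite multigraph and peel off perfect matchings via Hall's theorem. The alternating-path argument above is more self-contained, so it is the one we follow.
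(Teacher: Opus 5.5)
The paper gives no proof of this statement: Theorem~\ref{konig} is quoted from König and used only as a black box, to split a bipartite multigraph of maximum degree $\Delta$ into $\Delta$ matchings (for example in Lemma~\ref{lemma:absorption} and Claim~\ref{claim:smaller}). So there is no argument in the paper to compare yours against. Your induction on $|E(G)|$ with an $\alpha/\beta$ alternating path is the standard proof, and it is correct and complete. The key steps all hold:
the component of $v$ in the $\{\alpha,\beta\}$-subgraph is a path, because $v$ has degree one there;
parity puts every vertex of $B$ on $P$ at odd distance from $v$, so such a vertex is entered by an $\alpha$-edge, which rules out $u\in V(P)$;
the induction hypothesis supplies only $\Delta(G-e)\le\Delta$ colors, which is all you need.

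One phrase should be tightened. At the far endpoint of $P$, maximality guarantees that the color the last edge is switched \emph{to} was absent there before the swap. That fact, together with properness of $\varphi$, keeps the endpoint proper.

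Your alternative route would also fit the paper's own toolkit. The two-copy construction in Claim~\ref{claim:regularization} preserves bipartiteness: if $(A,B)$ is a bipartition of $G$ and subscripts denote the copies $G_1,G_2$, then $(A_1\cup B_2,\,B_1\cup A_2)$ is a bipartition of $\widehat G$. Hence $G$ embeds in a $\Delta$-regular bipartite multigraph, from which Hall's theorem peels off $\Delta$ perfect matchings.
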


The following theorem of Kano provides the even factors needed in odd regular multigraphs.

\begin{theorem}[Kano~\cite{kano1984}]\label{kano}
Let $\Delta$ be odd, and let $G$ be a $2$-edge-connected $\Delta$-regular loopless multigraph. For every edge $e\in E(G)$ and every positive even integer $k\le2\Delta/3$, the graph $G$ has a $k$-factor containing $e$.
\end{theorem}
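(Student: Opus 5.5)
We would derive Theorem~\ref{kano} from Tutte's $f$-factor theorem. To force the edge $e=uv$ into the factor, subdivide $e$ by a new vertex $w$, obtaining $G'$, and set $f(w)=2$ and $f(x)=k$ for $x\in V(G)$. An $f$-factor of $G'$ must then use both $uw$ and $wv$. Contracting them back gives a $k$-factor of $G$ containing $e$, and conversely. So it suffices to verify Tutte's condition in $G'$: for all disjoint $S,T\subseteq V(G')$,
\[
\delta(S,T)=f(S)-f(T)+\sum_{x\in T}d_{G'-S}(x)-q(S,T)\ge 0 ,
\]
where $q(S,T)$ counts the components $C$ of $G'-(S\cup T)$ with $f(C)+e(C,T)$ odd. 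Since $f(V(G'))=k|V(G)|+2$ is even, $\delta(S,T)$ is always even. It therefore suffices to prove $\delta(S,T)\ge -1$. Also, $f$ is even everywhere, so $C$ is counted exactly when $e(C,T)$ is odd.

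Set $\alpha=k/\Delta\le 2/3$ and ignore $w$ for the moment, i.e.\ work in the $\Delta$-regular graph $G$. Use regularity in two ways. First, $k|S|=\alpha\sum_{x\in S}d(x)\ge \alpha\bigl(e(S,T)+\sum_C e(C,S)\bigr)$. Second,
\[
\sum_{x\in T}d_{G-S}(x)-k|T|=(1-\alpha)\Delta|T|-e(T,S)\ge(1-\alpha)\Bigl(e(T,S)+\sum_C e(C,T)\Bigr)-e(T,S).
\]
The two $e(S,T)$ terms combine to $(\alpha+(1-\alpha)-1)e(S,T)=0$. This leaves
\[
\delta(S,T)\ge\sum_{C}\Bigl(\alpha\, e(C,S)+(1-\alpha)\,e(C,T)\Bigr)-q(S,T),
\]
so it is enough that each counted component has weight $\alpha e(C,S)+(1-\alpha)e(C,T)\ge1$. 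Let $C$ be counted, so $e(C,T)$ is odd.
\begin{itemize}
\item If $e(C,T)\ge3$, the weight is at least $3(1-\alpha)\ge1$ precisely because $\alpha\le2/3$. This is where the hypothesis $k\le 2\Delta/3$ enters.
\item If $e(C,T)=1$, then $2$-edge-connectivity gives $e(C,S\cup T)\ge2$, hence $e(C,S)\ge1$. The weight is then at least $(1-\alpha)+\alpha=1$.
\end{itemize}
(Components with no edges to $S\cup T$ cannot occur unless $S\cup T=\emptyset$, which is trivial.)

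The main obstacle is the bookkeeping for the subdivision vertex $w$ and the two edges $uw,wv$, which break exact regularity at one spot. We would handle it by cases according to whether $w$ lies in $S$, in $T$, or in a component, and whether $u,v$ lie in $S$ or $T$. In each case we would check that the error relative to the computation above is strictly less than $2$, so that $\delta(S,T)\ge-1$. For example, if $w\in T$ it contributes $-2+d_{G'-S}(w)$, which is at least $-2$, and the parity argument absorbs this only if we recover a slack of at least $1$ elsewhere. That slack should come from the two edges $uw,wv$ being counted in the regularity estimates with positive weight. If $w$ lies in a component, that component has the same $e(C,S\cup T)$ as the corresponding component of $G$, so nothing changes. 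Once $\delta\ge-1$ is established in every case, parity gives $\delta\ge0$, and Tutte's theorem yields the required $k$-factor containing $e$.
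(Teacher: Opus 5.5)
The paper does not prove this statement. It quotes it from Kano and uses it as a black box in Claim~\ref{claim:even-factor}, so there is no internal proof to compare with, and a Tutte $f$-factor argument like yours would make that ingredient self-contained. Your setup is sound: the subdivision that forces $e$ into the factor, the parity of $\delta$, the weighted count in the regular part, and both bullets are all correct.

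\textbf{The gap.} The step you yourself call the main obstacle is only announced, not carried out, and your sketch of it is wrong in two places.
\begin{enumerate}
\item When $w\in T$, you ask for a slack of at least $1$ from the edges $uw,wv$. Those edges recover only $2\alpha=2k/\Delta$, which is below $1$ when, for example, $k=2$ and $\Delta\ge5$. In fact all you need is that the total deficit be strictly less than $2$; integrality then gives $\delta\ge-1$.
\item When $w$ lies in a component, that component need not correspond to a component of $G$. If $u,v\in S\cup T$, then $\{w\}$ is a new component, and it is counted exactly when one of $u,v$ lies in $S$ and the other in $T$. This is harmless, since its weight is $\alpha+(1-\alpha)=1$, but it is not true that nothing changes.
\end{enumerate}

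\textbf{How to close it.} Do not compare with $G$ at all; run your count directly in $G'$.
\begin{itemize}
\item Every $x\in V(G)$ satisfies $d_{G'}(x)=\Delta$ and $f(x)=\alpha\, d_{G'}(x)$, while $f(w)=2=\alpha\, d_{G'}(w)+2(1-\alpha)$.
\item Put $\epsilon(w)=2(1-\alpha)$ and $\epsilon(x)=0$ for $x\ne w$. Let $e(S)$ and $e(T)$ denote the numbers of edges inside $S$ and inside $T$.
\item Expanding $\sum_{x\in S}d_{G'}(x)$ and $\sum_{x\in T}d_{G'}(x)$ exactly gives the identity
\[
\delta(S,T)=2\alpha\, e(S)+2(1-\alpha)\,e(T)+\sum_C\bigl(\alpha\, e(C,S)+(1-\alpha)\,e(C,T)\bigr)-q(S,T)+\epsilon(S)-\epsilon(T),
\]
with all edge counts taken in $G'$.
\item $G'$ is a subdivision of a $2$-edge-connected multigraph, so it is $2$-edge-connected. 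Hence your two bullets apply verbatim in $G'$, and every counted component has weight at least $1$.
\item Therefore $\delta(S,T)\ge-\epsilon(T)\ge-2(1-\alpha)>-2$, because $\alpha=k/\Delta>0$. Integrality and parity then give $\delta(S,T)\ge0$.
\end{itemize}
No case analysis on the positions of $u$, $v$, and $w$ is needed.

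Your argument never uses that $\Delta$ is odd. That is consistent: for even $\Delta$, the statement follows at once from Petersen's theorem.
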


The following bound helps us obtain a dense odd set when the required proper edge coloring does not exist.
\begin{theorem}[Chen, Jing, and Zang~\cite{chen2025goldberg}]\label{gs}
Every loopless multigraph $G$ satisfies
$
\chi'(G)\le\max\{\Gamma_1(G),\Delta(G)+1\}.
$
\end{theorem}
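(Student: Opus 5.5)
This statement is the Goldberg--Seymour theorem, which we only invoke; a genuine proof is long, and the plan below is the standard strategy of Chen, Jing, and Zang, with the hard step not carried out. Put $k:=\max\{\Gamma_1(G),\Delta(G)+1\}$ and argue by contradiction. Assume $\chi'(G)\ge k+1$. Deleting edges that do not lower the chromatic index, we may assume $G$ is \emph{$(k+1)$-critical}: $\chi'(G)=k+1$ and $\chi'(G-e)\le k$ for every edge $e$.

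Fix an edge $e=xy$ and a $k$-edge-coloring $\varphi$ of $G-e$. Since $k\ge\Delta(G)+1$, every vertex misses at least one color, and $x,y$ each miss at least two. A vertex set $X$ is \emph{elementary} if the missing-color sets of its vertices are pairwise disjoint. It is \emph{closed} if no color missing in $X$ appears on an edge of $\partial X$. It is \emph{strongly closed} if, in addition, every color appears on at most one edge of $\partial X$.

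The key counting step runs as follows. Suppose $X\ni x,y$ is elementary and strongly closed with respect to $\varphi$. Count missing colors: each vertex misses $k-d(v)$ colors, with $x$ and $y$ counted one extra because of $e$. Elementarity gives $\sum_{v\in X}(k-d_{G-e}(v))\le k$. Strong closedness forces parity constraints: each color missing nowhere in $X$ has an odd number of ends on edges in $\partial X$, hence is on exactly one boundary edge. Combining these, $|X|$ is odd and $2|E(G[X])|>k(|X|-1)$. This gives $\Gamma_1(G)\ge\lceil|E(G[X])|/((|X|-1)/2)\rceil>k$, contradicting the choice of $k$.

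So everything reduces to producing such an $X$. Start from a Tashkinov tree $T$ rooted at $e$, built by repeatedly adding an edge whose color is missing at a vertex already in $T$. Tashkinov's theorem gives that $V(T)$ is elementary. If $V(T)$ is closed but not strongly closed, some color $\delta$ appears on two or more boundary edges. We then \emph{extend} by adding one such $\delta$-edge as a connecting edge and continuing to grow the tree. This produces an extended Tashkinov tree $T=T_0\cup f_1\cup T_1\cup\cdots\cup f_n\cup T_n$, which we take maximal over all colorings and all choices of connecting edges. Maximality then yields strong closedness.

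The main obstacle is proving that such an extended tree remains elementary. One must show that the Kempe changes used to certify elementarity do not destroy the structure of earlier segments $T_j$. To handle this, I would follow the Chen--Jing--Zang approach of restricting to colorings that are \emph{stable} with respect to the earlier segments, so that they preserve colors on all connecting edges and the colors missing in earlier segments. I would then prove elementarity by induction on $n$, via a long case analysis of Kempe chains inside $T_n$ and across the connecting edges $f_j$. Here the key technical notion is \emph{maximum defective color} sequences.
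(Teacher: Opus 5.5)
The paper gives no proof of this theorem. It is quoted from Chen, Jing, and Zang and used only as a black box, in the proof of Lemma~\ref{lemma:remainder} and in the remark before Lemma~\ref{cycle-matching}. So citing it, as your first sentence says you do, is exactly what the paper does.

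The parts you actually write out are sound after two small repairs.
\begin{itemize}
\item \emph{Reduction to a critical graph.} If $\chi'(G)\ge k+2$, deleting only edges that do not lower $\chi'$ never reaches $\chi'=k+1$. Instead pass to an edge-minimal subgraph $H$ with $\chi'(H)\ge k+1$. Then $\chi'(H)=k+1$, every edge of $H$ is critical, and $k\ge\max\{\Gamma_1(H),\Delta(H)+1\}$, since neither $\Delta$ nor $\Gamma_1$ increases under taking subgraphs.
\item \emph{Order of the counting step.} The parity of $|X|$ must come first. A color missing at $x$ is, by elementarity, present at every other vertex of $X$, and by closedness it lies on no edge of $\partial X$. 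So its edges perfectly match $X\setminus\{x\}$, and $|X|$ is odd. Only then does every color missing nowhere in $X$ lie on an odd number of edges of $\partial X$, hence on exactly one by strong closedness. Summing over all $k$ colors and adding the uncolored edge $e$ gives $|E(G[X])|=k(|X|-1)/2+1$.
\end{itemize}

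The genuine gap is the step you explicitly leave undone: that a suitably maximal extended Tashkinov tree is elementary. Criticality, Tashkinov's theorem, and the count from an elementary strongly closed set were all available well before the conjecture was settled. The deferred step is the entire difficulty, so what you have is an outline of the cited proof, not a proof.

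Nor is it just a matter of filling in a case analysis for the objects as you describe them, because your two requirements pull against each other.
\begin{itemize}
\item ``Maximality yields strong closedness'' is immediate only if a connecting edge of any defective color may be added and the next segment then grown freely under any coloring.
\item Elementarity, however, can be established only for a restricted class of extended trees and recolorings. The restrictions include stability of colorings on the earlier segments, colors reserved for each rung, and a maximality condition on the segments.
\item One must then prove separately that a maximal member of that restricted class is still strongly closed.
\end{itemize}
Your proposal specifies neither this class nor either of the two arguments, so beyond the citation itself it does not prove the statement.
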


The following lemma colors regular multigraphs whose degree is slightly below a multiple of $d$.
\begin{lemma}\label{lemma:absorption}
Let $G$ be a $(qd-h)$-regular loopless multigraph with odd girth $g_0(G)$, where $d>1$ is odd and $q,h$ are positive integers. If $h\le q\le hg_0(G)$ and $qd-h$ is even, then
$
\chi'_d(G)\le q.
$
\end{lemma}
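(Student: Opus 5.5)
The plan is to count degrees first and then build the color classes from $2$-factors. Write $D:=qd-h$. Since $d$ is odd, $d-1$ is even, and
\[
D=(q-h)d+h(d-1).
\]
So we aim for $h$ \emph{absorber} classes $A_1,\dots,A_h$ that are $(d-1)$-regular, and $q-h$ \emph{main} classes $B_1,\dots,B_{q-h}$ of maximum degree at most $d$. Because $D$ and $h(d-1)$ are even, $(q-h)d$ is even, and since $d$ is odd this forces $q-h$ to be even. The hypothesis $h\le q$ guarantees that this split makes sense.

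Since $D$ is even and positive, repeated use of Theorem~\ref{pet} decomposes $G$ into $D/2$ $2$-factors. We assign $(d-1)/2$ of these $2$-factors to each absorber $A_i$, which uses $h(d-1)/2$ of them. The remaining $(q-h)d/2$ $2$-factors are grouped into $(q-h)/2$ bundles of $d$ $2$-factors each, one bundle for each pair of main classes $B_{2j-1},B_{2j}$. Inside a bundle we color each $2$-factor by alternating colors $B_{2j-1}$ and $B_{2j}$ along every cycle. On even cycles this alternation is perfect. On an odd cycle it fails at exactly one \emph{defect} vertex, where both incident edges would receive the same color.

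To repair a defect, we remove one of its two edges from the bundle and recolor that edge into an absorber class. An absorber has degree $d-1$, so it can accept one extra edge at each vertex. The requirement is therefore that, for each $i$, the edges moved into $A_i$ form a matching. Each odd cycle has length at least $g_0(G)$, and we are free to choose where the defect of each odd cycle sits, namely at any of its at least $g_0$ vertices. This freedom is where the hypothesis $q\le hg_0(G)$ is used. There are $(q-h)d/2$ bundle $2$-factors in total, and together the absorbers provide $h$ units of spare capacity at each vertex. For each $2$-factor, we would route the defect positions so that different odd cycles in the same or different $2$-factors never load the same vertex in the same absorber more than once. The natural tool is to stagger the choices cyclically along each odd cycle, or to set up an auxiliary bipartite graph between odd cycles and pairs (vertex, absorber) and apply Theorem~\ref{konig} or Hall's condition. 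The inequality $q\le hg_0$ should be exactly the degree condition that makes Hall's condition hold.

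The main obstacle is this last assignment step, namely proving that the moved edges can be distributed among $A_1,\dots,A_h$ as matchings. If a direct Hall argument is awkward, we would try a fallback that adjusts degrees rather than moving whole edges. Inside a bundle, a vertex that is a defect with both edges colored $B_{2j-1}$ in one $2$-factor can be compensated by being a defect with both edges colored $B_{2j}$ in another $2$-factor of the same bundle. Then the degree of every vertex in each main class stays $d$, apart from a bounded surplus, and only that surplus is pushed into the absorbers.
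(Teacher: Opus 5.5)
\textbf{The proof has a genuine gap: the defect-repair step, which you yourself flag as the main obstacle, is never carried out, and the heuristic you offer for it is false.} Repairing each odd cycle by moving one of its edges into an absorber cannot be justified by $q\le hg_0(G)$ alone. Take $G$ to be the $5$-cycle with every edge of multiplicity $7$, and let $d=3$, $q=5$, $h=1$. Then $G$ is $14=qd-h$ regular and $h\le q\le hg_0(G)$. Every $2$-factor of $G$ is a Hamiltonian $5$-cycle, since a $2$-factor on five vertices needs an odd cycle and all odd cycles of $G$ have length $5$. Your two bundles consist of six $2$-factors and so produce six defects. The single absorber $A_1$, however, can only receive a matching, which has at most two edges here. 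In general, with $n:=|V(G)|$, the bundle $2$-factors can contain about $(q-h)dn/(2g_0)$ odd cycles, while the absorbers accept at most $h\lfloor n/2\rfloor$ edges. So the move-every-defect plan needs roughly $(q-h)d\le hg_0$, which is far stronger than the hypothesis; no Hall-type argument gives exactly $q\le hg_0(G)$.

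The compensation fallback might handle this particular example, but it is only sketched. You give no argument that the required cancellations exist in general. Moreover, a loss-free split of a bundle would be a $d$-factor of a $2d$-regular graph, and this cannot exist when $|V(G)|$ is odd because $d$ is odd. So every bundle must still push edges into the absorbers, and the capacity problem comes back in a harder form. As written, the proposal does not prove the lemma.

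\textbf{The missing idea} is to give \emph{every} one of the $q$ classes, not only $h$ absorbers, a $(d-1)$-factor made of $(d-1)/2$ of the $2$-factors. Since $qd-h=q(d-1)+(q-h)$, the remaining graph $G'$ is $(q-h)$-regular. Each class then has room for exactly one more matching, so it suffices to show $\chi'(G')\le q$. If $G'$ is bipartite, this is Theorem~\ref{konig}. Otherwise $g_0(G')\ge g_0(G)$, and Goldberg's Theorem~\ref{gold} gives $\chi'(G')\le (q-h)+\lceil (q-h)/(g_0(G)-1)\rceil\le q$. The last inequality is precisely $q-h\le h(g_0(G)-1)$, i.e.\ $q\le hg_0(G)$, which is where the hypothesis actually enters. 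Your defect-repair step is in effect an attempt to reprove an odd-girth edge-coloring bound by hand. Invoking Goldberg's theorem on the $(q-h)$-regular remainder is what closes the argument.
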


\begin{proof}
All unions in this proof are edge-disjoint. Since
$
qd-h=q(d-1)+(q-h)
$
and $d-1$ is even, $q-h$ is even. By repeatedly applying Theorem~\ref{pet}, let $F_1,\ldots,F_q$ be edge-disjoint $(d-1)$-factors of $G$, each a union of $(d-1)/2$ $2$-factors, and let
$
G':=G-\bigcup_{i=1}^qF_i.
$
Then $G'$ is $(q-h)$-regular. If $q=h$, then $G'=\emptyset$, and $F_1,\ldots,F_q$ give a $(q,d)$-edge coloring of $G$. Thus assume that $q>h$.

If $G'$ is bipartite, then Theorem~\ref{konig} gives a proper edge coloring of $G'$ with $q-h$ colors. Let $M_1,\ldots,M_{q-h}$ be its color classes. For every $i\in[q-h]$, let $D_i:=F_i\cup M_i$. Then $\Delta(D_i)\le d$, and together with $F_{q-h+1},\ldots,F_q$, these give a $(q,d)$-edge coloring of $G$. Suppose that $G'$ is non-bipartite. Since $g_0(G')\ge g_0(G)$, Theorem~\ref{gold} gives
\begin{align*}
\chi'(G')&\le q-h+1+\left\lfloor\frac{q-h-2}{g_0(G')-1}\right\rfloor\\
&\le q-h+1+\left\lfloor\frac{q-h-2}{g_0(G)-1}\right\rfloor\\
&\le q-h+\left\lceil\frac{q-h}{g_0(G)-1}\right\rceil.
\end{align*}
Let
$
M_1,\ldots,M_{q-h+\lceil(q-h)/(g_0(G)-1)\rceil}
$
be the color classes of such a proper edge coloring, adding empty color classes if necessary. Since $q\le hg_0(G)$, we have
$
q-h\le h(g_0(G)-1),
$
and hence
$
\left\lceil(q-h)/({g_0(G)-1})\right\rceil\le h.
$
Therefore the number of matchings is at most $q$. Add each matching to a distinct factor $F_i$ and leave the remaining factors unchanged. These subgraphs partition $E(G)$, and every one has maximum degree at most $d$, so $\chi'_d(G)\le q$.
\end{proof}

The following lemma also follows directly from Theorem~\ref{gs}. Indeed, let $H:=\bigcup_{i=1}^sC_i$. Then $\Delta(H)=2s$. For every proper odd set $X\subsetneq V(H)$ with $|X|>2$, the underlying graph of $H[X]$ is a forest, so $|E(H[X])|\le s(|X|-1)$ and its contribution to $\Gamma_1(H)$ is at most $2s$. Every even set also contributes at most $\Delta(H)=2s$, whereas the full vertex set contributes
\[
\left\lceil\frac{|E(H)|}{\lfloor |V(H)|/2\rfloor}\right\rceil
=\left\lceil\frac{sg}{(g-1)/2}\right\rceil
=2s+\left\lceil\frac{2s}{g-1}\right\rceil\ge2s+1.
\]
Hence
$
\Gamma_1(H)=2s+\left\lceil2s/({g-1})\right\rceil,
$
and Theorem~\ref{gs} gives
\[
\chi'(H)\le\max\{\Gamma_1(H),\Delta(H)+1\}
=2s+\left\lceil\frac{2s}{g-1}\right\rceil.
\]
For completeness, we give a direct proof because its explicit decomposition will be useful.

\begin{lemma}\label{cycle-matching}
Let $g\ge3$ be odd, and let $C_1,\ldots,C_s$ be edge-disjoint copies of $C_g$ with the same underlying cycle, where $s$ is a positive integer. Then the union of $C_1,\ldots,C_s$ can be partitioned into
$
2s+\left\lceil{2s}/({g-1})\right\rceil
$
matchings.
\end{lemma}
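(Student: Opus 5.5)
Write $H:=\bigcup_{i=1}^sC_i$. Its underlying graph is the cycle $v_1v_2\cdots v_gv_1$, and every edge of that cycle has multiplicity $s$. Let $t:=\lceil 2s/(g-1)\rceil$. The plan is an explicit decomposition of $H$ into $2s+t$ matchings. The only freedom in a matching of $H$ is which cycle edges it uses, since parallel edges are interchangeable. So it suffices to choose, for each cycle edge $v_iv_{i+1}$, how many of the $2s+t$ matchings contain a copy of it. The requirements are that each matching uses a matching of $C_g$, and that each edge position is covered exactly $s$ times in total.

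For each $j\in[g]$, let $N_j$ be the maximum matching of $C_g$ of size $(g-1)/2$ that misses the vertex $v_j$. Thus $N_j$ consists of alternate edges starting at $v_{j+1}v_{j+2}$ and ending at $v_{j-2}v_{j-1}$. The key observation is that, as $j$ runs over $[g]$, each cycle edge lies in exactly $(g-1)/2$ of the $N_j$'s. So the $g$ matchings $N_1,\ldots,N_g$ cover every edge exactly $(g-1)/2$ times.

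First I would handle the clean case $2s=m(g-1)$. Here $s=m(g-1)/2$ and $t=m$. Taking each $N_j$ exactly $m$ times gives $mg=2s+m=2s+t$ matchings, and each edge is covered $m(g-1)/2=s$ times, as required.

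In general, write $2s=m(g-1)+r$ with $0<r<g-1$. Then $r$ is even, because $2s$ and $g-1$ are both even, and $t=m+1$. Again take each $N_j$ $m$ times. This leaves $s'=r/2$ copies of each edge still to be covered, where $1\le s'<(g-1)/2$. These must be split into $2s'+1$ matchings. To do this, I would use $N_1,N_2,\ldots,N_{2s'+1}$ and count, for each edge $v_iv_{i+1}$, how many of these contain it. Consecutive $N_j$'s alternate parities along the cycle, so I expect each edge to be covered either $s'$ or $s'+1$ times.

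Excess coverage is harmless, since we can delete edges from a matching and it stays a matching. Equally, we only need each edge to be covered at least $s'$ times, because we then drop extra copies. So it suffices to check that a consecutive window of $2s'+1\le g$ of the $N_j$'s covers each edge at least $s'$ times. Fix an edge $e=v_iv_{i+1}$. It is missed by $N_j$ exactly when $j\in\{i,i+1\}$, or when $j$ has the wrong parity relative to $i$ along the arc. Among the $2s'+1$ consecutive indices, the non-covering ones are the two endpoint indices together with roughly every other index. Counting them should show that at most $s'+1$ of the $2s'+1$ indices fail to cover $e$, which leaves at least $s'$ covering ones.

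The main obstacle is this verification, because the parity pattern of the $N_j$'s wraps around the odd cycle. I would do it by writing $N_j=\{v_{j+2k-1}v_{j+2k}:1\le k\le (g-1)/2\}$, with indices taken mod $g$, and checking directly that the edge $v_iv_{i+1}$ lies in $N_j$ if and only if $i-j$ is odd when read in $\{1,\ldots,g-1\}$. Among any $2s'+1<g$ consecutive values of $j$, the quantity $i-j \bmod g$ takes $2s'+1$ consecutive residues. At most one wrap past $0$ occurs, which can break the parity alternation only once, so at least $s'$ of these residues are odd and lie in $[1,g-1]$. This completes the count, giving $mg+2s'+1=2s+t$ matchings in total.
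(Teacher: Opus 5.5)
Your proof is correct, but it takes a different route from the paper's.

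The paper works copy by copy. It splits the $s$ copies into $\lceil 2s/(g-1)\rceil$ groups of at most $(g-1)/2$ copies each. In a group of $t$ copies it deletes one edge from each copy, choosing them over distinct edges of a single near-perfect matching of $C_g$, so the deleted edges form one matching. Each remaining path of length $g-1$ then splits into two matchings, giving $2t+1$ matchings per group, and no counting is needed beyond summing over the groups.

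You instead work purely with multiplicities and round the uniform cover of $C_g$ by its $g$ near-perfect matchings $N_1,\ldots,N_g$. Full rounds absorb $m(g-1)/2$ copies per edge using $mg$ matchings. A window of $2s'+1$ consecutive rotations then absorbs the remaining $s'$ copies.

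Your window count is right, but the sentence about the alternation breaking only once should name the bound. For fixed $i$, the residues $(i-j)\bmod g$ over the window form at most two descending runs of consecutive integers, since the only break is $0$ followed by $g-1$. Let the runs have lengths $a$ and $b$ with $a+b=2s'+1$. A run of length $a$ contains at least $\lfloor a/2\rfloor$ odd residues. Since exactly one of $a,b$ is odd, $\lfloor a/2\rfloor+\lfloor b/2\rfloor=s'$.

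The paper's argument is shorter and avoids the modular bookkeeping. Yours shows directly that the count is $\lceil |E(H)|/((g-1)/2)\rceil=2s+\lceil 2s/(g-1)\rceil$, the edge-count lower bound coming from maximum matchings of size $(g-1)/2$. It also realizes that bound by an explicit cyclic family of matchings.
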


\begin{proof}
Partition the $s$ cycles into $h:=\lceil2s/(g-1)\rceil$ groups, each containing at most $(g-1)/2$ cycles. Let one group consist of $C_1,\ldots,C_t$. Then $t\le(g-1)/2$. Let $F$ be a near-perfect matching of the underlying cycle $C_g$. Since $t\le|F|=(g-1)/2$, there are distinct edges $e_1,\ldots,e_t\in F$. For each $i\in[t]$, let $f_i$ be the edge of $C_i$ joining the same pair of vertices as $e_i$. Since $e_1,\ldots,e_t$ are pairwise disjoint, $f_1,\ldots,f_t$ are pairwise disjoint. Thus $A:=\{f_1,\ldots,f_t\}$ is a matching. For each $i\in[t]$, the graph $C_i-f_i$ is a path and can be partitioned into two matchings. Hence the union of $C_1,\ldots,C_t$ can be partitioned into $2t+1$ matchings. If the groups contain $t_1,\ldots,t_h$ cycles, then
\[
\sum_{j=1}^h(2t_j+1)=2\sum_{j=1}^ht_j+h=2s+\left\lceil\frac{2s}{g-1}\right\rceil.
\]
\end{proof}

The following lemma partitions the edges of a bounded-degree multigraph with large odd girth into matchings and a subgraph of maximum degree at most $3$.
\begin{lemma}\label{lemma:remainder}
Let $t\ge2$ be even, and let $H$ be a loopless multigraph with $\Delta(H)\le t+1$ and no odd cycle of length less than $t-1$. Then $E(H)$ can be partitioned into matchings $M_1,\ldots,M_{t-1}$ and a subgraph $D$ with $\Delta(D)\le3$.
\end{lemma}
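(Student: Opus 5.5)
The plan is to argue by induction on the even integer $t$, peeling off two matchings at a time. The invariant is that $\Delta$ drops by $2$ while the odd-girth hypothesis is inherited by subgraphs.

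\emph{Base case and bipartite case.} For $t=2$ the statement asks for a single matching $M_1$ and a remainder $D$ with $\Delta(D)\le3$. Since $\Delta(H)\le3$, we may take $M_1=\emptyset$ and $D=H$. If $H$ is bipartite (for any $t$), Theorem~\ref{konig} partitions $E(H)$ into $t+1$ matchings. We let $M_1,\ldots,M_{t-1}$ be $t-1$ of them, and let $D$ be the union of the other two, so $\Delta(D)\le2$. Thus the real content is the non-bipartite case with $t\ge4$.

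\emph{Inductive step.} Suppose $t\ge4$ and $H$ is non-bipartite. First, we may enlarge nothing and only remove edges, so the hypothesis ``no odd cycle of length less than $t-1$'' is preserved under taking subgraphs. It then also implies ``no odd cycle of length less than $(t-2)-1$''. Hence it suffices to find two disjoint matchings $M,M'$ in $H$ such that $H':=H-M-M'$ satisfies $\Delta(H')\le t-1=(t-2)+1$. Induction then applies to $H'$ with $t-2$ in place of $t$ and yields matchings $M_1,\ldots,M_{t-3}$ and a subgraph $D$ with $\Delta(D)\le3$. Adding $M_{t-2}:=M$ and $M_{t-1}:=M'$ completes the partition. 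Concretely, $M\cup M'$ must cover every vertex of degree $t+1$ twice and every vertex of degree $t$ at least once.

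\emph{Producing the two matchings.} To find $M,M'$, I would start from the proper edge coloring of $H$ given by Theorem~\ref{gold}. With $\Delta(H)\le t+1$ and $g_0(H)\ge t-1$, it uses at most
\[
\Delta(H)+1+\left\lfloor\frac{\Delta(H)-2}{g_0(H)-1}\right\rfloor\le t+2+\left\lfloor\frac{t-1}{t-2}\right\rfloor=t+3
\]
colors when $t\ge4$. Each vertex of degree $t+1$ therefore misses at least two of the $t+3$ colors, and a vertex of degree $t$ misses at least three. I would choose two color classes $\alpha,\beta$ and take $M=M_\alpha$, $M'=M_\beta$. The bad vertices are the high-degree vertices missing $\alpha$ or $\beta$. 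These would be repaired by Kempe-chain interchanges on $(\alpha,\gamma)$- and $(\beta,\gamma)$-paths, where $\gamma$ is a color missing at the bad vertex. Such an interchange makes the bad vertex see $\alpha$ (or $\beta$), provided the chain does not end at another high-degree vertex that would lose that color. The alternative would be to choose $\alpha,\beta$ to minimize the number of deficient vertices and derive a contradiction from a chain between two deficient vertices.

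\emph{Main obstacle.} I expect this step to be the main difficulty: guaranteeing that two matchings reduce the maximum degree by exactly $2$. The parity and length of these alternating paths is where the odd-girth hypothesis should enter. A pair of bad chains would close up into an odd cycle that is too short, since its length is controlled by the $t-1$ threshold. If the Kempe argument becomes unwieldy, my fallback is Theorem~\ref{gs} applied to an auxiliary graph. I would add pendant structure to $H$ so that high-degree vertices are forced to be saturated by two designated color classes, and use the odd-girth condition to bound $\Gamma_1$ of small odd sets: any odd set of size less than $t-1$ induces a bipartite graph, so it contributes at most $\Delta$.
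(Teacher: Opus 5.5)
\textbf{There is a genuine gap: the inductive step you rely on is false, not merely hard to carry out.} Take $H=R(t+1,t-1)$ for any even $t\ge4$. It satisfies the hypotheses, since $\Delta(H)=t+1$ and its odd girth is exactly $t-1$. It has $t-1$ vertices, an odd number, so every matching has at most $(t-2)/2$ edges. Two matchings therefore cover at most $2t-4$ vertex incidences. To get $\Delta(H-M-M')\le t-1$ you must cover each of the $t-2$ vertices of degree $t+1$ twice and the vertex of degree $t$ once, which is $2t-3$ incidences. So no choice of $M,M'$ works, whether you find them by Kempe interchanges or through an auxiliary Goldberg--Seymour argument.

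This graph is in fact tight for the lemma: $|E(H)|=(t^2-2)/2=(t-1)\cdot\frac{t-2}{2}+\frac{3t-4}{2}$. Hence in every valid partition each $M_i$ is a near-perfect matching and $D$ has degree $3$ at all but one vertex. Your hope that two bad chains close into a too-short odd cycle also cannot succeed, because here the odd girth is exactly at the threshold $t-1$. So the obstruction has to be handled directly rather than excluded.

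\textbf{What is missing is the correct dichotomy.} If $\chi'(H)\le t+2$, you are done at once: take $t-1$ color classes as the $M_i$ and the other three as $D$. Goldberg's bound gives only $t+3$, one color too many, so you need Theorem~\ref{gs} instead. If $\chi'(H)>t+2$, it yields an odd set $X$ with $2|E(H[X])|>(t+2)(|X|-1)$. Sets of size less than $t-1$ induce bipartite graphs, which rules them out, and a parity count rules out $|X|\ge t+1$. The result is $|X|=t-1$, $H[X]\cong R(t+1,t-1)$, and at most one edge $uv$ leaving $X$, where $u$ is the deficient vertex.

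You then decompose this ring by hand, as follows.
\begin{itemize}
\item $D_X$ is one $(t-1)$-cycle together with the near-perfect matching that misses $u$.
\item The remaining $t/2-1$ cycles split by Lemma~\ref{cycle-matching} into $t-1$ near-perfect matchings $N_j$, and each vertex of $X$ is missed by exactly one of them.
\end{itemize}
In a minimum-order counterexample, decompose $H-X$ by minimality and pair $M_j$ with $N_j$. Route the edge $uv$ into $D$ if $d_D(v)\le2$. Otherwise route it into the pair $M_i\cup N_i$ with $M_i$ missing $v$ and $N_i$ missing $u$.
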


\begin{proof}
If $t=2$, let $M_1=\emptyset$ and $D:=H$. Thus assume that $t\ge4$. Suppose that the result fails, and let $H$ be a minimum-order counterexample. If $H$ is disconnected, apply the minimality of $H$ to each component and combine the corresponding matchings and subgraphs, contradicting that $H$ is a counterexample. Thus $H$ is connected.

If $\chi'(H)\le t+2$, let $M_1,\ldots,M_{t+2}$ be the color classes of a proper edge coloring of $H$, adding empty color classes if necessary. Then $M_1,\ldots,M_{t-1}$ are matchings and
$
D:=M_t\cup M_{t+1}\cup M_{t+2}
$
has maximum degree at most $3$, contradicting that $H$ is a counterexample. Hence $\chi'(H)>t+2$. Since $\Delta(H)+1\le t+2$, Theorem~\ref{gs} implies that $\Gamma_1(H)>t+2$. An even set cannot witness this inequality, since its contribution to $\Gamma_1(H)$ is at most $\Delta(H)\le t+1$. Thus there is an odd set $X\subseteq V(H)$ with $|X|\ge3$ such that
\begin{equation}\label{eq:remainder-density}
2|E(H[X])|>(t+2)(|X|-1).
\end{equation}

If $|X|<t-1$, then $H[X]$ is bipartite, since it contains no odd cycle. Let $A\cup B$ be a bipartition of $H[X]$. Since $|X|$ is odd,
$
\min\{|A|,|B|\}\le(|X|-1)/2,
$
and hence
\begin{align*}
2|E(H[X])| &\le 2(t+1)\min\{|A|,|B|\} \\
           &\le (t+1)(|X|-1) \\
           &< (t+2)(|X|-1)
\end{align*}
contradicting \eqref{eq:remainder-density}. Thus $|X|\ge t-1$. Moreover,
$
(t+2)(|X|-1)<2|E(H[X])|\le(t+1)|X|,
$
so $|X|<t+2$. Since $|X|$ is odd and $t$ is even, $|X|\in\{t-1,t+1\}$. If $|X|=t+1$, then $(t+1)|X|=(t+1)^2$ is odd, and therefore
\[
2|E(H[X])|\le(t+1)^2-1=t(t+2)=(t+2)(|X|-1),
\]
again contradicting \eqref{eq:remainder-density}. Hence $|X|=t-1$. Consequently,
$
t^2-4<2|E(H[X])|\le t^2-1.
$
Since $2|E(H[X])|$ is even, it follows that
$
2|E(H[X])|=t^2-2.
$
Consequently,
\[
\sum_{x\in X}\bigl(t+1-d_{H[X]}(x)\bigr)=1.
\]
Thus one vertex $u\in X$ has degree $t$ in $H[X]$, and every other vertex has degree $t+1$. We first claim that $H[X]$ is non-bipartite. Otherwise, let $A\cup B$ be a bipartition of $H[X]$. Since $|X|=t-1$ is odd,
$
\min\{|A|,|B|\}\le (t-2)/2.
$
Consequently,
\[
2|E(H[X])|\le2(t+1)\min\{|A|,|B|\}\le(t+1)(t-2)<t^2-4,
\]
contradicting \eqref{eq:remainder-density}. Thus $H[X]$ contains an odd cycle. Since $|X|=t-1$ and $H$ has no odd cycle of length less than $t-1$, this cycle contains every vertex of $X$. Moreover, the underlying graph of $H[X]$ has no chord, because a chord of an odd cycle creates a shorter odd cycle. Hence
$
\underline{H[X]}\cong C_{t-1},
$
and the degree of the vertices of $H[X]$ imply that
\[
H[X]\cong R(t+1,t-1),
\]
with $u$ as its unique vertex of degree $t$. By the definition of $R(t+1,t-1)$, the graph $H[X]$ is the union of $t/2$ edge-disjoint copies $C_1,\ldots,C_{t/2}$ of $C_{t-1}$ and a near-perfect matching $N$ missing $u$. The number of edges joining $X$ to $V(H)\setminus X$ is at most
$
\sum_{x\in X}\bigl(t+1-d_{H[X]}(x)\bigr)=1.
$
Since $H$ is connected, either $X=V(H)$ or exactly one edge $e=uv$ joins $X$ to $V(H)\setminus X$. Since $u$ is the only vertex of $X$ with positive degree deficiency, this edge is incident with $u$.

Let
$
D_X:=C_1\cup N.
$
Then $d_{D_X}(u)=2$ and $d_{D_X}(x)=3$ for every $x\in X\setminus\{u\}$. By Lemma~\ref{cycle-matching}, the union of $C_2,\ldots,C_{t/2}$ can be partitioned into
\[
2\left(\frac t2-1\right)+\left\lceil\frac{t-2}{t-2}\right\rceil=t-1
\]
matchings $N_1,\ldots,N_{t-1}$. These matchings contain
$
\left( t/2-1\right)(t-1)={(t-2)(t-1)}/2
$
edges in total, and each has size at most $(t-2)/2$. Hence each $N_i$ has size $(t-2)/2$. Since every vertex has degree $t-2$ in $\bigcup_{i=1}^{t-1}N_i$, every vertex of $X$ is missed by exactly one of these matchings.

If $X=V(H)$, then $N_1,\ldots,N_{t-1},D_X$ give the required partition, contradicting that $H$ is a counterexample. Hence $X\ne V(H)$. Therefore exactly one edge $e=uv$ joins $X$ to $V(H)\setminus X$. Let $H':=H-X$. By the minimality of $H$,
\[
E(H')=E(M_1)\cup\cdots\cup E(M_{t-1})\cup E(D),
\]
where each $M_i$ is a matching and $\Delta(D)\le3$. Since $d_{H'}(v)\le t$, either $d_D(v)\le2$ or $v$ is missed by some $M_i$. If $d_D(v)\le2$, since $d_{D_X}(u)=2$, we have
$
\Delta(D\cup D_X\cup\{e\})\le3.
$
Moreover, for every $j\in[t-1]$, the union $M_j\cup N_j$ is a matching because $M_j$ lies in $H'$ and $N_j$ lies in $H[X]$, which have disjoint vertex sets. Therefore
\[
E(H)=E(D\cup D_X\cup\{e\})\cup\bigcup_{j=1}^{t-1}E(M_j\cup N_j)
\]
is the required partition, a contradiction.

Otherwise, let $M_i$ miss $v$. Since every vertex of $X$ is missed by exactly one of $N_1,\ldots,N_{t-1}$, relabel these matchings so that $N_i$ misses $u$. The matchings $M_i$ and $N_i$ lie in the vertex-disjoint graphs $H'$ and $H[X]$, respectively, and neither contains an edge incident with an endpoint of $e=uv$. Hence $M_i\cup N_i\cup\{e\}$ is a matching. For every $j\ne i$, $M_j\cup N_j$ is also a matching, and $\Delta(D\cup D_X)\le3$. Therefore
\[
E(H)=E(D\cup D_X)\cup E(M_i\cup N_i\cup\{e\})\cup\bigcup_{j\ne i}E(M_j\cup N_j)
\]
is the required partition, again a contradiction.
\end{proof}

We now use Lemma~\ref{cycle-matching} to show that the bound in Theorem~\ref{main} is sharp. Let $\Delta>d>1$ and let $g_0\ge3$, where $d$ and $g_0$ are odd. For $G:=R(\Delta,g_0)$, we have $|E(G)|=\lfloor\Delta g_0/2\rfloor$ and $\lfloor dg_0/2\rfloor=(dg_0-1)/2$. Hence
\[
\Gamma_d(G)\ge\left\lceil\frac{|E(G)|}{\lfloor dg_0/2\rfloor}\right\rceil
=\left\lceil\frac{\Delta g_0-1}{dg_0-1}\right\rceil=p(G),
\]
where, when $\Delta$ is even, the equality follows because $(\Delta g_0-1)/(dg_0-1)$ is not an integer.

\begin{lemma}\label{ring-coloring}
For odd integers $d>1$ and $g_0\ge3$ and every $\Delta>d$,
\[
\chi'_d(R(\Delta,g_0))=\left\lceil\frac{\Delta g_0-1}{dg_0-1}\right\rceil.
\]
\end{lemma}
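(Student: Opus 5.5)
The lower bound is already in place. The computation just before the statement gives $\chi'_d(R(\Delta,g_0))\ge\Gamma_d(R(\Delta,g_0))\ge p$, where $p:=\lceil(\Delta g_0-1)/(dg_0-1)\rceil$. So it remains to exhibit a $(p,d)$-edge coloring of $G:=R(\Delta,g_0)$.

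\textbf{Decomposition.} Write $s:=\lfloor\Delta/2\rfloor$ and $g:=g_0$. By the definition of the almost full ring graph, $G$ is the edge-disjoint union of $s$ copies $C_1,\ldots,C_s$ of the underlying cycle $C_g$. When $\Delta$ is odd, there is in addition a near-perfect matching $N$ missing $v_1$, formed by the extra edges on $v_2v_3,v_4v_5,\ldots,v_{g-1}v_g$.

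\textbf{Shape of each color class.} Each color class will consist of at most $(d-1)/2$ whole cycles together with at most one matching. Such a class has maximum degree at most $(d-1)+1=d$.

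\textbf{Assignment of cycles and matchings.} Let $r:=s-p(d-1)/2$, which is an integer because $d$ is odd.

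If $r\le0$, distribute all $s$ cycles among the $p$ colors with at most $(d-1)/2$ per color. If $N$ exists, add it to any color; this is legitimate since we need $p\ge1$ only.

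If $r>0$, give each of the $p$ colors exactly $(d-1)/2$ cycles. This leaves $r$ cycles. By Lemma~\ref{cycle-matching}, these $r$ cycles split into $2r+\lceil 2r/(g-1)\rceil$ matchings, plus the matching $N$ when $\Delta$ is odd. We then place these matchings one per color, which requires their number to be at most $p$.

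\textbf{The arithmetic check.} This is the only real step.

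For $\Delta$ odd we have $2r+1=\Delta-p(d-1)$. The required inequality becomes
\[
\left\lceil\frac{\Delta-1-p(d-1)}{g-1}\right\rceil\le pd-\Delta .
\]
Since the right-hand side is an integer, this is equivalent to $\Delta-1-p(d-1)\le (g-1)(pd-\Delta)$. Rearranging, this is $g\Delta-1\le p(dg-1)$, which holds by the definition of $p$.

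For $\Delta$ even we have $2r=\Delta-p(d-1)$. The same rearrangement turns the requirement into $g\Delta\le p(dg-1)$. This holds because $g\Delta-1\le p(dg-1)$ and, as noted before the statement, $(\Delta g-1)/(dg-1)$ is not an integer, so equality $g\Delta-1=p(dg-1)$ is impossible.

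\textbf{Main obstacle.} I expect the only delicate points to be this ceiling manipulation, in particular the non-integrality used in the even case, and confirming that the case $r\le0$ needs no separate treatment. Here $\Delta>d$ guarantees $p\ge2$, but even $p\ge1$ suffices.
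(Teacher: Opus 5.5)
Your proposal is correct and follows the paper's proof essentially step for step. It uses the same decomposition into $\lfloor\Delta/2\rfloor$ copies of $C_{g_0}$ plus a near-perfect matching, the same case split according to whether more than $p(d-1)/2$ cycles remain, Lemma~\ref{cycle-matching} for the leftover cycles, the same reduction of the count to $g_0\Delta-1\le p(dg_0-1)$ (resp.\ $g_0\Delta\le p(dg_0-1)$ for even $\Delta$), and the same $\Gamma_d$ lower bound. The only cosmetic difference is in the even case: the paper gets the strict inequality from the evenness of $\Delta g_0-p(dg_0-1)$, while you invoke the non-integrality of $(\Delta g_0-1)/(dg_0-1)$, which is the same fact.
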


\begin{proof}
Let $G:=R(\Delta,g_0)$, let
$
p:=\lceil(\Delta g_0-1)/(dg_0-1)\rceil,
$
and let $b:=(d-1)/2$. Suppose first that $\Delta$ is odd. Then $G$ is the union of $m:=(\Delta-1)/2$ edge-disjoint copies of $C_{g_0}$ and a near-perfect matching $M^*$. If $m\le pb$, distribute these cycles among $p$ subgraphs, each containing at most $b$ cycles, and add $M^*$ to one of them. This gives a $(p,d)$-edge coloring. Otherwise, let $H_1,\ldots,H_p$ each consist of $b$ cycles. The remaining graph consists of $M^*$ and
$
r:=(\Delta-1-p(d-1))/2
$
cycles. By Lemma~\ref{cycle-matching}, the $r$ remaining cycles can be partitioned into
$
2r+\left\lceil{2r}/({g_0-1})\right\rceil
$
matchings. Together with $M^*$, this gives
$
q:=2r+\left\lceil{2r}/({g_0-1})\right\rceil+1
$
matchings. Since
\[
q-p=\left\lceil\frac{\Delta g_0-1-p(dg_0-1)}{g_0-1}\right\rceil\le0,
\]
we have $q\le p$. Add empty matchings if $q<p$, and add the resulting $p$ matchings to $H_1,\ldots,H_p$. This gives a $(p,d)$-edge coloring.

Now suppose that $\Delta$ is even. Then $G$ is the union of $m:=\Delta/2$ edge-disjoint copies of $C_{g_0}$. If $m\le pb$, distribute the cycles among $p$ subgraphs as above. Otherwise, after forming $H_1,\ldots,H_p$, the remaining graph consists of
$
r:=(\Delta-p(d-1))/2
$
cycles and can be partitioned into
$
q:=2r+\left\lceil{2r}/({g_0-1})\right\rceil
$
matchings. By the definition of $p$, the integer $\Delta g_0-p(dg_0-1)$ is at most $1$. It is even because both $\Delta g_0$ and $p(dg_0-1)$ are even, and hence it is at most $0$. Therefore
\[
q-p=\left\lceil\frac{\Delta g_0-p(dg_0-1)}{g_0-1}\right\rceil\le0.
\]
Thus $q\le p$. Add empty matchings if $q<p$, and add the resulting $p$ matchings to $H_1,\ldots,H_p$. Hence $\chi'_d(G)\le p$.

By the preceding calculation, $\Gamma_d(G)\ge p(G)=p$. Since $\chi'_d(G)\ge\Gamma_d(G)$ and we have proved that $\chi'_d(G)\le p$, we obtain
$
p\le\Gamma_d(G)\le\chi'_d(G)\le p.
$
Therefore
$
\chi'_d(G)=\Gamma_d(G)=p.
$
\end{proof}

Thus the value $p(G)$ in Theorem~\ref{main} is best possible.

\section{Proof of the main result}\label{main-result}

\begin{proof}[Proof of Theorem~\ref{main}]
The case $d=1$ follows from Theorem~\ref{gold}. Hence we may assume that $d\ge3$. If $p(G)=1$, then $g_0(G)\Delta(G)-1\le dg_0(G)-1$, so $\Delta(G)\le d$ and $\chi'_d(G)=1$. Thus assume that $p(G)\ge2$.

Suppose that the theorem is false. Choose the smallest odd integer $d\ge3$ for which a counterexample exists, and let
\[
c:=\min\left\{\left\lceil\frac{p(H)-1}{g_0(H)}\right\rceil:H\text{ is a counterexample for }d\right\}.
\]
Choose a counterexample $G$ attaining this minimum.

First, we show that we can assume that a counterexample is regular without changing its odd girth. We then prove that it satisfies $g_0(G)\ge5$, $\Delta(G)=p(G)d-c$, and $p(G)-c$ is odd. We next prove that a minimum-order counterexample has at most one cut-edge and contains the required even factors, and reduce to $d=3$, $c=1$, and $\Delta(G)=3p(G)-1$, where $p(G)\ge2$ is even and $g_0(G)\ge p(G)-1$. Finally, we combine $p(G)-1$ edge-disjoint $2$-factors of $G$ with a partition of the remaining edges into $p(G)-1$ matchings and a subgraph of maximum degree at most $3$.

\begin{claim}\label{claim:regularization}
For every integer $r\ge\Delta(G)$, there is an $r$-regular loopless multigraph $\widehat G$ containing a copy of $G$ such that $g_0(\widehat G)=g_0(G)$.
\end{claim}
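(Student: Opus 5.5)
We build $\widehat G$ by taking many copies of $G$ and joining them only by edges between corresponding vertices, a standard embedding trick. The point is that such a construction never shortens odd cycles. Let $\delta_v:=r-d_G(v)\ge0$ be the deficiency of each $v\in V(G)$, and let $G^1,G^2$ be two disjoint copies of $G$. Write $v^1,v^2$ for the copies of $v$. We join $v^1$ to $v^2$ by $\delta_v$ parallel edges for every $v$ with $\delta_v>0$. The result $\widehat G$ is loopless, since the new edges join distinct vertices, and it is $r$-regular: each $v^i$ has degree $d_G(v)+\delta_v=r$. It contains $G^1\cong G$.

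The key step is the odd girth. Let $\pi:V(\widehat G)\to V(G)$ send $v^i\mapsto v$, and let $C$ be an odd cycle in $\widehat G$. The new edges form an edge cut between $V(G^1)$ and $V(G^2)$, so $C$ uses an even number $2j$ of them. Contract these $2j$ edges and map the remaining edges by $\pi$. This turns $C$ into a closed walk in $G$ of length $|C|-2j$, which is odd. Every odd closed walk contains an odd cycle of length at most its own length. Hence $G$ has an odd cycle of length at most $|C|-2j\le|C|$, giving $g_0(\widehat G)\ge g_0(G)$. The reverse inequality holds because $G^1\subseteq\widehat G$, so $g_0(\widehat G)=g_0(G)$.

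The main point needing care is this parity argument. It is exactly why each new edge joins two copies of the \emph{same} vertex: such an edge projects to a single point rather than to an edge of $G$, so it creates no odd cycle that $G$ lacks. Equivalently, $\widehat G$ has a homomorphism onto $G$ plus loops, and loops traversed an even number of times do not affect parity. One also checks that no single-copy construction, such as adding edges between vertices of $G$ itself, is needed. The result is therefore an explicit finite loopless multigraph, with the multiplicities $\delta_v$ handled uniformly whatever the parity of $r$ and of $|V(G)|$.
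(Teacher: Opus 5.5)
Your proposal is correct and follows essentially the same route as the paper. Both use two disjoint copies of $G$ joined by $r-d_G(v)$ parallel edges between the copies of each $v$, and both observe that an odd cycle crosses this cut an even number of times and so projects to an odd closed walk in $G$ of length at most its own.
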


\begin{proof}[Proof of Claim~\ref{claim:regularization}]
Let $G_1$ and $G_2$ be two disjoint copies of $G$. For every $v\in V(G)$, add $r-d_G(v)$ edges between the copies of $v$ in $G_1$ and $G_2$, and let $\widehat G$ be the resulting multigraph. Then $\widehat G$ is $r$-regular and contains a copy of $G$, so $g_0(\widehat G)\le g_0(G)$. Suppose that $\widehat G$ has an odd cycle $C$ of length less than $g_0(G)$. The cycle $C$ is not contained in $G_1$ or $G_2$, so it uses added edges. Let $f_1,\ldots,f_t$ be these edges in cyclic order. Since each added edge switches between $G_1$ and $G_2$, $t$ is even. Delete $f_1,\ldots,f_t$ from $C$ and identify corresponding vertices and edges of $G_1$ and $G_2$. The remaining edges form an odd closed walk in $G$ of length $|C|-t<g_0(G)$ and hence contain a shorter odd cycle, a contradiction. Therefore $g_0(\widehat G)=g_0(G)$.
\end{proof}

Apply the claim with $r=\Delta(G)$. Then $\widehat G$ has the same values of $d$, $\Delta(G)$, $g_0(G)$, $p(G)$, and $c$ as $G$. Moreover, a $(p(G),d)$-edge coloring of $\widehat G$ would restrict to one of $G$, so $\widehat G$ is also a counterexample. Hence we may assume that $G$ is $\Delta(G)$-regular and among all regular counterexamples with the chosen $d$ and $c$, choose $G$ of minimum order, and let
$
p:=p(G)$, $ g_0:=g_0(G)$ and $\Delta:=\Delta(G).
$

\begin{claim}\label{claim:main-reduction}
We have $g_0\ge5$, $\Delta=pd-c$, and $p-c$ is odd.
\end{claim}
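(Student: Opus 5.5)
The plan is to dispose of $g_0=3$ using Theorem~\ref{3french}, and then to pin down $\Delta$ by writing $\Delta=pd-h$. Two facts should force $h=c$ and $p-c$ odd: the defining inequality of $p$ gives $h\ge c$, and Lemma~\ref{lemma:absorption} applied to a suitable regular supergraph from Claim~\ref{claim:regularization} rules out every other option.

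\emph{Step 1: $g_0\ge5$.} If $g_0=3$, then $p=\lceil(3\Delta-1)/(3d-1)\rceil$. Since $d\ge3$ is odd, Theorem~\ref{3french} gives $\chi'_d(G)\le p$, contradicting that $G$ is a counterexample. As $g_0$ is odd, $g_0\ge5$.

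\emph{Step 2: $h\ge c$.} Write $h:=pd-\Delta$. By the definition of $p$ we have $p(dg_0-1)\ge g_0\Delta-1$, which rearranges to $g_0h\ge p-1$. Hence $h\ge\lceil(p-1)/g_0\rceil$, and this equals $c$ because $G$ attains the minimum defining $c$. In particular $c\ge1$ since $p\ge2$. We also have $c<p$, since $\lceil(p-1)/g_0\rceil\le p-1$. Finally $p-1\le cg_0$, so $p\le cg_0+1$.

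\emph{Step 3: forcing $h=c$ and $p-c$ odd.} Suppose instead that $h\ge c+1$, or that $h=c$ with $p-c$ even. Choose $h'\in\{c,c+1\}$ with $p-h'$ even, i.e.\ with $pd-h'$ even, using that $d$ is odd. Under our assumption $h'\le h$: if $h=c$ then $p-c$ is even and $h'=c$, and otherwise $h'\le c+1\le h$. So $r:=pd-h'\ge\Delta$. Claim~\ref{claim:regularization} then gives an $r$-regular loopless multigraph $\widehat G\supseteq G$ with $g_0(\widehat G)=g_0$. Now check the hypotheses of Lemma~\ref{lemma:absorption} with $q=p$ and this $h'$:
\begin{itemize}
\item $h'\le p$, since $c<p$.
\item $pd-h'$ is even by the choice of $h'$.
\item $p\le h'g_0$. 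If $h'=c+1$, this follows from $p\le cg_0+1\le(c+1)g_0$. If $h'=c$, then $p-c$ is even. The only case where $p\le cg_0$ could fail is $p=cg_0+1$, and then $p-c=c(g_0-1)+1$ is odd, so this case does not arise.
\end{itemize}
The lemma yields $\chi'_d(\widehat G)\le p$. Restricting to $G$ gives $\chi'_d(G)\le p$, a contradiction. Therefore $h=c$, i.e.\ $\Delta=pd-c$, and $p-c$ is odd.

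The main obstacle is the boundary case $p=cg_0+1$, where the absorption lemma with $h=c$ narrowly fails the condition $q\le hg_0$. The parity observation in Step 3 handles it: in that case $p-c$ is odd, so either the lemma is applied with $h'=c+1$ or there is nothing to exclude.
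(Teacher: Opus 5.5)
Your proof is correct and follows the paper's argument. The paper likewise uses Theorem~\ref{3french} for $g_0=3$ and derives $\Delta\le pd-c$ from the definition of $p$. It then applies Claim~\ref{claim:regularization} and Lemma~\ref{lemma:absorption} with $h=c$ when $p-c$ is even, using the same parity argument to exclude $p=cg_0+1$, and with $h=c+1$ when $p-c$ is odd and $\Delta<pd-c$. Your only change is merging these two contradiction steps into one by choosing $h'\in\{c,c+1\}$ according to parity.
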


\begin{proof}[Proof of Claim~\ref{claim:main-reduction}]
If $g_0=3$, then Theorem~\ref{3french} gives
$
\chi'_d(G)\le\left\lceil({3\Delta-1})/({3d-1})\right\rceil=p,
$
a contradiction. Thus $g_0\ge5$. By the choice of $p$, we have $
g_0\Delta-1\le p(dg_0-1)=pdg_0-p
$
and since $\Delta$ is an integer, we have
$
\Delta\le pd-\left\lceil({p-1})/{g_0}\right\rceil=pd-c.
$

Suppose that $p-c$ is even. Then, since $d$ is odd, $pd-c$ is even as well. Since $c=\lceil(p-1)/g_0\rceil$, we have $p\le cg_0+1$. If $p=cg_0+1$, then
$
p-c=c(g_0-1)+1
$
is odd, a contradiction. Hence $p\le cg_0$ and also by the choice of $c$, we have $c\le p$. By Claim~\ref{claim:regularization}, applied with $r=pd-c$, there is a $(pd-c)$-regular loopless multigraph $\widehat G$ containing a copy of $G$ such that $g_0(\widehat G)=g_0$. Since $c\le p\le cg_0$ and $pd-c$ is even, Lemma~\ref{lemma:absorption}, applied with $q=p$ and $h=c$, gives $\chi'_d(\widehat G)\le p$. Restricting this coloring to $G$ gives $\chi'_d(G)\le p$, a contradiction. Therefore $p-c$ is odd.

Suppose that $\Delta<pd-c$. Then
$
\Delta\le pd-(c+1).
$
Moreover, since $d$ and $p-c$ are both odd, $pd-(c+1)$ is even. Again, by the choice of $c$, we have 
$
c+1\le p\le cg_0+1\le(c+1)g_0.
$
By Claim~\ref{claim:regularization}, applied with $r=pd-(c+1)$, there is a $(pd-(c+1))$-regular loopless multigraph $\widehat G$ containing a copy of $G$ such that $g_0(\widehat G)=g_0$. Lemma~\ref{lemma:absorption}, applied with $q=p$ and $h=c+1$, gives $\chi'_d(\widehat G)\le p$. Restricting this coloring to $G$ gives $\chi'_d(G)\le p$, again a contradiction. Therefore $\Delta=pd-c$.
\end{proof}

\begin{claim}\label{claim:smaller}
Every $\Delta$-regular loopless multigraph $G'$ with $|V(G')|<|V(G)|$ that is bipartite or has odd girth at least $g_0$ is $(p,d)$-edge colorable.
\end{claim}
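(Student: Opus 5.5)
This is a minimality argument, handled separately for the bipartite and the non-bipartite case.

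\emph{Bipartite case.} Suppose $G'$ is bipartite. By Hakimi and Kariv, $\chi'_d(G')=\lceil\Delta/d\rceil$. Alternatively, Theorem~\ref{konig} gives $\Delta$ matchings, and these can be grouped $d$ at a time. Since $\Delta=pd-c\le pd$, this uses at most $p$ colors, so no minimality is needed here.

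\emph{Non-bipartite case.} Now let $G'$ be non-bipartite with $g_0(G')\ge g_0$. We have $\Delta(G')=\Delta$, so
\[
p(G')=\left\lceil\frac{g_0(G')\Delta-1}{dg_0(G')-1}\right\rceil .
\]
The function $x\mapsto (x\Delta-1)/(xd-1)$ is nonincreasing in $x$ whenever $\Delta\ge d$. Indeed, its derivative has the sign of $-\Delta+d\le 0$. Hence $p(G')\le p$. If $G'$ is not a counterexample for $d$, then $\chi'_d(G')\le p(G')\le p$ and we are done.

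So suppose $G'$ is a counterexample for $d$. The key step is to compare $c':=\lceil (p(G')-1)/g_0(G')\rceil$ with $c$. Since $p(G')\le p$ and $g_0(G')\ge g_0$, we get $c'\le c$. Minimality of $c$ forces $c'=c$. Thus $G'$ is a regular counterexample with the same $d$ and $c$ and with fewer vertices than $G$. This contradicts the choice of $G$ as a regular counterexample of minimum order.

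\emph{Main obstacle.} The one subtle point is whether the order-minimality was taken over all regular counterexamples with this $d$ and $c$, or only over those with the same $\Delta$ and $g_0$. The choice as stated ranges over all regular counterexamples with the chosen $d$ and $c$, so the comparison above suffices. I would double-check that the monotonicity inequality $p(G')\le p$ is stated explicitly in the write-up, since the $c'\le c$ step depends on it. I would also note that $\Delta>d$ holds because $p\ge2$.
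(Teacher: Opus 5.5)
Your proof is correct and follows the paper's argument: König plus grouping in the bipartite case, and in the non-bipartite case monotonicity of $p$ in the odd girth giving $p(G')\le p$, hence $\lceil (p(G')-1)/g_0(G')\rceil\le c$, with equality forced by the minimality of $c$ and a contradiction with order-minimality. The paper also reruns the argument of Claim~\ref{claim:main-reduction} on $G'$ to get $\Delta=p(G')d-c$, and hence $p(G')=p$, before invoking order-minimality; as you point out, this step is unnecessary because the minimum is taken over all regular counterexamples with the chosen $d$ and $c$.
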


\begin{proof}[Proof of Claim~\ref{claim:smaller}]
If $G'$ is bipartite, then Theorem~\ref{konig}, followed by grouping the matching color classes into groups of at most $d$, gives
$
\chi'_d(G')\le\left\lceil{\Delta}/{d}\right\rceil\le p.
$
Suppose that $G'$ is non-bipartite, and let $h:=g_0(G')\ge g_0$ and $p':=p(G')$. Since $h\ge g_0$ and $\Delta>d$,
\[
\frac{g_0\Delta-1}{dg_0-1}-\frac{h\Delta-1}{dh-1}
=\frac{(h-g_0)(\Delta-d)}{(dg_0-1)(dh-1)}\ge0,
\]
and hence $p'\le p$. Therefore
$
\left\lceil({p'-1})/{h}\right\rceil\le c.
$
If $G'$ were not $(p,d)$-edge colorable, then it would not be $(p',d)$-edge colorable. By the minimality of $c$, we would have
$
\lceil(p'-1)/h\rceil=c.
$
The argument of Claim~\ref{claim:main-reduction}, applied to $G'$, would then give
$
\Delta=p'd-c.
$
Since $\Delta=pd-c$, we would have $p'=p$, contradicting the choice of $G$ as a minimum-order regular counterexample with the chosen $d$ and $c$. Hence $G'$ is $(p,d)$-edge colorable.
\end{proof}

In particular, $G$ is connected, since otherwise its components could be colored separately.

\begin{claim}\label{claim:few-bridges}
$G$ has at most one cut-edge.
\end{claim}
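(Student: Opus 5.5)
The plan is a splitting argument at a cut-edge, combined with Claim~\ref{claim:smaller}. First I record the parity of $\Delta$. By Claim~\ref{claim:main-reduction}, $\Delta=pd-c$. Since $d$ is odd, $\Delta\equiv p-c\pmod 2$, and $p-c$ is odd, so $\Delta$ is odd (this is consistent with a regular graph having cut-edges).

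Suppose, for a contradiction, that $G$ has two cut-edges $e_1,e_2$. For $i=1,2$, let $A_i$ be the component of $G-e_i$ that does not contain the other edge $e_{3-i}$ (neither of its endpoints), and let $B_i:=V(G)\setminus A_i$. Then $A_1\cap A_2=\emptyset$. Moreover, some vertex of $G$ lies outside $A_1\cup A_2$, for instance the endpoint of $e_1$ in $B_1$. Each $G[A_i]$ has exactly one vertex $u_i$ of degree $\Delta-1$, and all its other vertices have degree $\Delta$. The same holds for $G[B_i]$, whose deficient vertex is the other endpoint $v_i$ of $e_i$.

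\textbf{Gluing step.} Suppose $K_1\supseteq G[A_1]$ and $K_2\supseteq G[B_1]$ are $\Delta$-regular, with $G[A_1]$ attached to the rest of $K_1$ only by a single edge at $u_1$, and similarly $G[B_1]$ attached in $K_2$ by a single edge at $v_1$. Suppose also that each $K_j$ has fewer vertices than $G$ and is bipartite or has odd girth at least $g_0$. Then Claim~\ref{claim:smaller} gives $(p,d)$-edge colorings of $K_1$ and $K_2$. Permute the colors of the second coloring so that the attaching edges receive the same color. Restrict the colorings to $G[A_1]$ and $G[B_1]$, and give $e_1$ that common color. Each vertex then sees, in each color, exactly as many edges as it saw in $K_1$ or $K_2$. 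This is a $(p,d)$-edge coloring of $G$, a contradiction. Odd girth is never a problem in these constructions: the new edges are cut-edges of $K_j$, so no new cycles arise.

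\textbf{Choosing $K_1,K_2$.} By symmetry assume $|A_1|\ge|A_2|$. Let $K_1$ be $G[A_1]$ together with a disjoint copy of $G[A_2]$, joined by an edge between $u_1$ and the copy of $u_2$. Then $K_1$ is $\Delta$-regular, and $|V(K_1)|=|A_1|+|A_2|<|V(G)|$ because some vertex lies outside $A_1\cup A_2$. Let $K_2$ be $G[B_1]$ together with a disjoint copy of $G[A_2]$, joined by an edge between $v_1$ and the copy of $u_2$. Its order is $|V(G)|-|A_1|+|A_2|$, which is strictly less than $|V(G)|$ whenever $|A_1|>|A_2|$. In that case we are done.

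\textbf{Main obstacle.} The hard case is $|A_1|=|A_2|$, where this choice of $K_2$ is no smaller than $G$. To handle it, I would first choose the pair of cut-edges more cleverly. I would look at the bridge tree of $G$, take $e_1,e_2$ to be cut-edges cutting off leaf pieces, and prefer a leaf piece that is strictly larger than some other leaf piece. If all leaf pieces have the same size, I would cut the middle part $M:=V(G)\setminus(A_1\cup A_2)$ off at both $e_1$ and $e_2$ and proceed as follows:
\begin{itemize}
\item color $A_1$ and $A_2$ simultaneously via $K_1$, which has order $2|A_1|<|V(G)|$;
\item color $G[M]$ together with its two deficiencies by splitting at $e_2$ recursively inside $B_1$, using a copy of the smaller leaf as the gadget at each stage.
\end{itemize}
Checking that some such gadget always yields a strictly smaller $\Delta$-regular graph is the step I expect to need the most care. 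Once it is settled, the same gluing argument yields the contradiction, so $G$ has at most one cut-edge.
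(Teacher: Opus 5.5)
Your gluing step is correct, and it reaches further than you say. Take $e_2$ to be the bridge at a smallest leaf of the bridge tree. Take $e_1$ to be either an internal bridge-tree edge or the bridge at a strictly larger leaf. Then $|A_1|>|A_2|$ in every configuration but one.

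\textbf{The gap.} The remaining configuration is exactly where your proposal stops: the bridge tree is a star with center $M$ whose $k\ge2$ leaves $L_1,\ldots,L_k$ all have the same order $a$. There every cut-edge has a side of order exactly $a$. Every gadget you allow, a copy of a side of some cut-edge attached by a single edge, has order at least $a$. So no choice of pair, and no recursion ``using a copy of the smaller leaf,'' can produce a strictly smaller $\Delta$-regular graph. The step you flag as needing the most care is a missing idea, not a verification. Also, your remark that odd girth is never a problem holds only for gadgets hanging off a single cut-edge.

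\textbf{What the paper does.} Two ingredients are missing: a lower bound on the order of a side, and a two-terminal gadget.
\begin{itemize}
\item Lower bound. Since $\Delta$ is odd, each side of a cut-edge has odd order and contains an odd cycle, so it has order at least $g_0$. Replacing a side of order greater than $g_0$ by the smallest possible gadget $R(\Delta,g_0)$ shrinks the graph. Hence one side of every cut-edge has order $g_0$, and that side is forced to be $R(\Delta,g_0)$.
\item Two-terminal gadget. With two cut-edges, the paper deletes two leaf rings and joins their attachment vertices $u,u'$ by a path $x_0\cdots x_{g_0}$ with multiplicities alternating $\Delta-1,1$. The result is $\Delta$-regular and has $g_0-1$ fewer vertices. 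Every new odd cycle has length at least $g_0+2$, even when $u=u'$.
\item Recoloring. The two end edges of the path may receive different colors. So the rings are colored independently by Lemma~\ref{ring-coloring}, using the slack $pd-(\Delta-1)=c+1>0$ at the deficient vertex to accept any prescribed color on the cut-edge. A separate check handles $u=u'$ with both end edges sharing a color.
\end{itemize}

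\textbf{How to repair your version.} You could graft this onto your setup. Color each $L_i$ alone via two copies of $L_i$ joined at their deficient vertices; this graph has order $2a<|V(G)|$ and gives slack at $u_i$ in any prescribed color. Then replace $L_1\cup L_2$ by the path gadget, which is smaller because $a\ge g_0$. Neither ingredient appears in your proposal. The paper's route also shows that one side of the cut-edge is $R(\Delta,g_0)$, a fact reused in the proof of Claim~\ref{claim:even-factor}; your argument would not supply it.
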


\begin{proof}[Proof of Claim~\ref{claim:few-bridges}] Since $d$ is odd and $p-c$ is odd, $\Delta=pd-c$ is odd. Suppose that $e=uv$ is a cut-edge, and let $A$ and $B$ be the components of $G-e$ containing $u$ and $v$, respectively. In $A$, the vertex $u$ has degree $\Delta-1$ and every other vertex has degree $\Delta$. Hence
$
\Delta|V(A)|-1=2|E(A)|,
$
so $|V(A)|$ is odd. Moreover, $A$ contains an odd cycle. Otherwise, let $X\cup Y$ be a bipartition of $A$ with $u\in X$. Then
$
\Delta|X|-1=|E(A)|=\Delta|Y|,
$
which is impossible. Thus $|V(A)|\ge g_0$. The same holds for $B$.

First, assume that both $|V(A)|>g_0$ and $|V(B)|>g_0$. Take a copy of $R(\Delta,g_0)$, let $w$ be its unique vertex of degree $\Delta-1$, and add the edge $uw$. The resulting $\Delta$-regular multigraph has no odd cycle shorter than $g_0$ and has fewer vertices than $G$, so it is $(p,d)$-edge colorable by Claim~\ref{claim:smaller}. Construct the analogous graph from $B$. Restrict the two colorings to $A$ and $B$, permute the colors on one side so that the two deleted joining edges have the same color, and give $e$ this color. This gives a $(p,d)$-edge coloring of $G$, a contradiction. Therefore at least one of $A$ and $B$ has order $g_0$. By symmetry, assume that $|V(A)|=g_0$.

We know $A$ contains an odd cycle of length at least $g_0$. Let 
$
C=v_1v_2\cdots v_{g_0}v_1
$
be that cycle with $u=v_1$. The cycle $C$ has no chord in $\underline A$, since every chord of an odd cycle creates an odd cycle of length less than $g_0$. Thus $\underline A\cong C_{g_0}$. For $i\in[g_0]$, let $m_i:=\mu(v_iv_{i+1})$, where the indices are taken modulo $g_0$. Since $d_A(v_i)=\Delta$ for $2\le i\le g_0$, we have
$
m_{i-1}+m_i=\Delta.
$
Hence the multiplicities alternate between $m_1$ and $\Delta-m_1$. Since $g_0$ is odd, $m_{g_0}=m_1$. Also, $d_A(u)=\Delta-1$, so
$
2m_1=m_{g_0}+m_1=\Delta-1.
$
Therefore $m_1=m_{g_0}=\lfloor\Delta/2\rfloor$, and the remaining multiplicities alternate between $\lceil\Delta/2\rceil$ and $\lfloor\Delta/2\rfloor$. Hence $A\cong R(\Delta,g_0)$ with $u$ as its unique vertex of degree $\Delta-1$. Therefore every cut-edge of $G$ has a copy of $R(\Delta,g_0)$ on one side.

Suppose that $G$ has at least two cut-edges. In the tree whose vertices are the components obtained by deleting all cut-edges, choose two leaf components. For a cut-edge incident with a leaf component, the opposite side contains another cut-edge because the tree has at least two edges. Hence the opposite side cannot be an almost full ring graph, so the leaf component is the ring side. Therefore both chosen leaf components are copies of $R(\Delta,g_0)$. Delete them and their incident cut-edges, and let $u,u'$ be the two vertices in the remaining graph that were incident with these cut-edges. Add vertices $x_0,\ldots,x_{g_0}$, add the edges $ux_0$ and $x_{g_0}u'$ with multiplicity one, and let
\[
\mu(x_ix_{i+1})=
\begin{cases}
\Delta-1,&\text{if }i\text{ is even},\\
1,&\text{if }i\text{ is odd},
\end{cases}
\qquad 0\le i\le g_0-1.
\]
The resulting multigraph $G'$ is $\Delta$-regular. Any odd cycle containing a new internal vertex must contain the entire added $u$--$u'$ path, which has length $g_0+2$. Hence every new odd cycle has length at least $g_0+2$, including when $u=u'$, so $G'$ has no odd cycle shorter than $g_0$. Moreover, $G'$ has fewer vertices than $G$. Hence $G'$ is $(p,d)$-edge colorable by Claim~\ref{claim:smaller}. Delete the added path. Let the colors of $ux_0$ and $x_{g_0}u'$ be $\alpha$ and $\beta$, respectively. Since $R(\Delta,g_0)$ has maximum degree $\Delta$ and odd girth $g_0$, we have
$
p(R(\Delta,g_0))=p.
$
By Lemma~\ref{ring-coloring}, $R(\Delta,g_0)$ has a $(p,d)$-edge coloring. Let $w$ be the unique vertex of degree $\Delta-1$ in $R(\Delta,g_0)$. In a $(p,d)$-edge coloring of $R(\Delta,g_0)$, the sum of the degrees of the $p$ color classes at $w$ is $\Delta-1$. Since their total capacity at $w$ is $pd$, we have
$
pd-(\Delta-1)=c+1>0.
$
Hence some color has degree at most $d-1$ at $w$. By permuting the colors, we may assume that this color is any prescribed color, so the cut-edge incident with $w$ can be assigned that color without making its degree exceed $d$. Restore the two deleted copies of $R(\Delta,g_0)$ and assign their incident cut-edges the colors $\alpha$ and $\beta$, respectively. If $u=u'$ and $\alpha=\beta$, deleting the two path edges of this color leaves color degree at most $d-2$ at $u$, so restoring both cut-edges still gives color degree at most $d$. Thus the resulting coloring is a $(p,d)$-edge coloring of $G$, a contradiction. Thus $G$ has at most one cut-edge.
\end{proof}

\begin{claim}\label{claim:even-factor}
$G$ has a $k$-factor for every positive even integer $k\le2\Delta/3$.
\end{claim}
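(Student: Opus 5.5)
We apply Theorem~\ref{kano}. By Claim~\ref{claim:few-bridges}, $G$ has at most one cut-edge, and $\Delta=pd-c$ is odd (as noted in the proof of Claim~\ref{claim:few-bridges}). $G$ is connected and loopless, so if $G$ has no cut-edge, then it is $2$-edge-connected and $\Delta$-regular. Theorem~\ref{kano} then gives a $k$-factor (containing any prescribed edge) for every positive even $k\le2\Delta/3$, and we are done.

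It remains to rule out the case that $G$ has exactly one cut-edge $e=uv$. The proof of Claim~\ref{claim:few-bridges} shows that one side, say $A$, is a copy of $R(\Delta,g_0)$ with $u$ its vertex of degree $\Delta-1$. We try to show this case contradicts the minimality of $G$. Let $B$ be the other side. Form $G'$ from $B$ together with the edge $e$ by attaching at $v$, through the edge $e$, a fresh copy of $R(\Delta,g_0)$; this amounts to replacing nothing, so that does not help. Instead, we aim for a direct argument: we find a $(p,d)$-edge coloring of $G$ by coloring the two sides separately and matching up the color of $e$, as in the proof of Claim~\ref{claim:few-bridges}.

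For side $B$, if $|V(B)|>g_0$, attach $R(\Delta,g_0)$ to $v$ by an edge. The resulting $\Delta$-regular graph has odd girth at least $g_0$ and fewer vertices than $G$ (since $|V(A)|=g_0<|V(B)|$), so Claim~\ref{claim:smaller} colors it. For side $A$, Lemma~\ref{ring-coloring} colors $A$ with $p$ colors, and at $u$ every color can be made deficient after permutation (using $pd-(\Delta-1)=c+1>0$). Permuting colors so that the color of $e$ agrees on both sides yields a $(p,d)$-edge coloring of $G$, a contradiction. If instead $|V(B)|=g_0$, then $B$ is also a copy of $R(\Delta,g_0)$. Then $G$ is two rings joined by $e$, and we color both by Lemma~\ref{ring-coloring}, each with a deficient color at its endpoint of $e$, aligned by permutation, again a contradiction. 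Hence $G$ is $2$-edge-connected, and Theorem~\ref{kano} applies.

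The main obstacle is the one-cut-edge case, specifically checking that the auxiliary graph built from $B$ is strictly smaller than $G$. This requires $|V(B)|>g_0$ or handling $|V(B)|=g_0$ separately. We also need the claim that in a ring coloring the deficient color at $u$ can be any prescribed color; this holds because permuting colors is free.
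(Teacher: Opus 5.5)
There is a genuine gap in how you handle the one-cut-edge case. Your $2$-edge-connected case is fine, and so is your subcase $|V(B)|=g_0$. The problem is the main subcase $|V(B)|>g_0$. Here $A\cong R(\Delta,g_0)$ with $u$ as its unique vertex of degree $\Delta-1$. So attaching a fresh copy of $R(\Delta,g_0)$ to $v$ by an edge rebuilds a graph isomorphic to $G$ itself. It has $|V(B)|+g_0=|V(G)|$ vertices, not fewer, so Claim~\ref{claim:smaller} does not apply.

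The inequality $|V(A)|=g_0<|V(B)|$ does not help. The reduction in Claim~\ref{claim:few-bridges} shrinks the graph only when the side being \emph{replaced} by a ring has more than $g_0$ vertices. Here the side you replace, $A$, already is a ring. You noticed this in your first attempt (``this amounts to replacing nothing'') and then used the same construction again.

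Nor can the gap be patched cheaply. By the argument in Claim~\ref{claim:few-bridges}, any gadget with one vertex of degree $\Delta-1$, all others of degree $\Delta$, and no odd cycle shorter than $g_0$ has at least $g_0$ vertices, with equality only for the ring. Also, $B$ itself is not regular, so minimality over regular counterexamples says nothing about it directly. Regularizing $B$ via Claim~\ref{claim:regularization} doubles its order, which may exceed $|V(G)|$. So the one-cut-edge case cannot be excluded by your argument.

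The missing idea is that the paper does not exclude the cut-edge; it builds the $k$-factor in spite of it. Let $u_1v_1$ be the cut-edge, with $Y\cong R(\Delta,g_0)$ on the cycle $v_1v_2\cdots v_{g_0}v_1$ and $X$ the other side.
\begin{itemize}
\item Delete an edge $u_1w$ of $X$ and one $v_1v_2$ edge. Add a second $u_1v_1$ edge and an edge $v_2w$.
\item The result $G'$ is $\Delta$-regular. It is $2$-edge-connected because $X$ and $Y$ are $2$-edge-connected, the cut-edge being unique.
\item Theorem~\ref{kano} then gives a $k$-factor $F$ of $G'$ containing $v_2w$.
\item Propagating degrees around the ring gives $\mu_F(v_1v_{g_0})=\mu_F(v_1v_2)+1$. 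This is odd, and $k$ is even, so $F$ contains exactly one of the two $u_1v_1$ edges.
\item Deleting that edge and $v_2w$ from $F$, and restoring $u_1w$ and $v_1v_2$, yields a $k$-factor of $G$.
\end{itemize}
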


\begin{proof}[Proof of Claim~\ref{claim:even-factor}]
Let $k\le2\Delta/3$ be a positive even integer. If $G$ is $2$-edge-connected, then the result follows from Theorem~\ref{kano}. Thus suppose that $G$ has a cut-edge $u_1v_1$. Let $X$ and $Y$ be the components of $G-u_1v_1$ containing $u_1$ and $v_1$, respectively. By Claim~\ref{claim:few-bridges}, $u_1v_1$ is the unique cut-edge of $G$, so both $X$ and $Y$ are $2$-edge-connected. By the proof of Claim~\ref{claim:few-bridges}, we may assume that
$
Y\cong R(\Delta,g_0).
$
Let the underlying cycle of $Y$ be formed by
$
v_1v_2\cdots v_{g_0}v_1.
$
Choose an edge $e_X=u_1w\in E(X)$, and let $e_Y$ be an edge joining $v_1$ and $v_2$. Delete $e_X$ and $e_Y$, and add one additional edge joining $u_1$ and $v_1$ and one edge joining $v_2$ and $w$. Let $G'$ be the resulting multigraph. Then $G'$ is $\Delta$-regular. Since $X$ and $Y$ are $2$-edge-connected, both $X-e_X$ and $Y-e_Y$ are connected, and hence $G'$ is connected. The two edges joining $u_1$ and $v_1$ and the edge $v_2w$ are not cut-edges, since after deleting any one of them at least one of the other two still joins $X-e_X$ to $Y-e_Y$. Suppose that $f\in E(X-e_X)$ is a cut-edge of $X-e_X$. Since $f$ is not a cut-edge of $X$, the vertices $u_1$ and $w$ lie in different components of $(X-e_X)-f$. In $G'-f$, however, they are joined by a path consisting of an edge from $u_1$ to $v_1$, a $v_1$--$v_2$ path in $Y-e_Y$, and the edge $v_2w$. Thus $f$ is not a cut-edge of $G'$. The same argument applies to every cut-edge of $Y-e_Y$. Therefore $G'$ is $2$-edge-connected. By Theorem~\ref{kano}, $G'$ has a $k$-factor $F$ containing $v_2w$. Let
$
x:=\mu_F(v_1v_2)$ and $y:=\mu_F(v_1v_{g_0}).
$
Since $G'$ has exactly two edges joining $u_1$ and $v_1$,
$
x+y\ge k-2.
$
Since $v_2w\in E(F)$, the degree equations along the cycle give
$
\mu_F(v_{2i}v_{2i+1})=k-x-1
$
and
$
\mu_F(v_{2i+1}v_{2i+2})=x+1
$
for $i=1,\ldots,(g_0-1)/2$, where the indices are taken modulo $g_0$. In particular,
$
y=x+1.
$
Therefore $x+y$ is odd. Since
$
k-2\le x+y\le k
$
and $k$ is even, we have
$
x+y=k-1.
$
Thus $F$ contains exactly one of the two edges joining $u_1$ and $v_1$. Delete this edge and $v_2w$ from $F$, and restore the deleted edges $u_1w$ and $v_1v_2$. The resulting spanning subgraph is a $k$-factor of $G$.
\end{proof}

\begin{claim}\label{claim:d-three}
$d=3$.
\end{claim}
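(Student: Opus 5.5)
The plan is to argue by contradiction: assume $d\ge5$ and use the minimality of $d$. I will strip off $p$ edge-disjoint $2$-factors, one per color. What remains is a regular multigraph that should be $(p,d-2)$-edge colorable by the theorem for the smaller odd value $d-2$, which holds by the choice of $d$.

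\emph{Step 1: find a $2p$-factor.} Since $c=\lceil (p-1)/g_0\rceil\le p$ and $d\ge5$, we have
\[
\Delta=pd-c\ge 5p-p=4p\ge 3p,
\]
so $2p\le 2\Delta/3$. Hence Claim~\ref{claim:even-factor} with $k=2p$ gives a $2p$-factor $F$ of $G$. By repeatedly applying Theorem~\ref{pet}, $F$ splits into $2$-factors $T_1,\dots,T_p$.

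\emph{Step 2: check the bound for $G':=G-E(F)$ with parameter $d-2$.} The multigraph $G'$ is $\Delta'$-regular with $\Delta'=p(d-2)-c$, and any odd cycle of $G'$ is one of $G$. So $G'$ is bipartite or satisfies $g_0(G')\ge g_0$.
\begin{itemize}
\item If $G'$ is bipartite, Theorem~\ref{konig} followed by grouping matchings gives $\chi'_{d-2}(G')\le\lceil\Delta'/(d-2)\rceil\le p$.
\item If $\Delta'\le d-2$, one color suffices.
\item Otherwise $\Delta'>d-2$, and the monotonicity computation in the proof of Claim~\ref{claim:smaller} (with $d-2$ in place of $d$) shows that $p_{d-2}(G')$ is at most the value computed with odd girth $g_0$. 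That value is at most $p$, since
\[
g_0\bigl(p(d-2)-c\bigr)-1\le p\bigl((d-2)g_0-1\bigr)\iff p\le cg_0+1,
\]
and the right-hand inequality holds because $c=\lceil (p-1)/g_0\rceil$.
\end{itemize}

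\emph{Step 3: invoke the theorem for $d-2$.} Since $d-2\ge3$ is odd and smaller than $d$, the minimal choice of $d$ means there is no counterexample for $d-2$. So Theorem~\ref{main} holds for $G'$ with defect $d-2$, and $\chi'_{d-2}(G')\le p$. Let $D_1,\dots,D_p$ be the color classes, padding with empty classes if needed.

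\emph{Step 4: combine.} Each $T_i\cup D_i$ has maximum degree at most $2+(d-2)=d$. Together these $p$ classes form a $(p,d)$-edge coloring of $G$, which is a contradiction. Hence $d=3$.

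The main point to check carefully is Step 2: that the reduced graph $G'$ really satisfies the $(d-2)$-bound with the same $p$. This rests on the identity $\Delta-2p=p(d-2)-c$ together with $p\le cg_0+1$, and on the odd-girth monotonicity when $G'$ has larger odd girth than $G$.
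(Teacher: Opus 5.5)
Your proposal is correct and follows the paper's proof essentially step for step. You remove a $2p$-factor from Claim~\ref{claim:even-factor}, split it into $p$ $2$-factors via Theorem~\ref{pet}, and color the $(p(d-2)-c)$-regular remainder with $p$ colors of defect $d-2$ (König in the bipartite case, minimality of $d$ otherwise). The differences are cosmetic: you check $2p\le 2\Delta/3$ via $\Delta\ge 4p$, and you bound $p_{d-2}(G')$ through odd-girth monotonicity plus $p\le cg_0+1$, whereas the paper substitutes $h=g_0(G')$ directly using $p\le cg_0+1\le ch+1$.
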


\begin{proof}[Proof of Claim~\ref{claim:d-three}]
Suppose that $d\ge5$. By the definition of $c$, we have $c\le p-1$, and Claim~\ref{claim:main-reduction} gives $\Delta=pd-c$. Hence
$
2\Delta-6p=2p(d-3)-2c\ge4p-2c>0.
$
This implies $2p<2\Delta/3$, and so Claim~\ref{claim:even-factor} gives a $2p$-factor $F$ in $G$. By repeatedly applying Theorem~\ref{pet}, let
$
F=F_1\cup\cdots\cup F_p,
$
where each $F_i$ is a $2$-factor. Let $H:=G-F$. Then $H$ is $(p(d-2)-c)$-regular. We show that $\chi'_{d-2}(H)\le p.$

If $H$ is bipartite, then Theorem~\ref{konig}, followed by grouping the matching color classes into groups of at most $d-2$, gives
\[
\chi'_{d-2}(H)\le\left\lceil\frac{p(d-2)-c}{d-2}\right\rceil\le p.
\]
Otherwise, let $h:=g_0(H)\ge g_0$ and
$
p':=\left\lceil(h\Delta(H)-1)/({(d-2)h-1})\right\rceil.
$
Since by the choice of $c$, we have $p\le cg_0+1\le ch+1$,
\[
h\Delta(H)-1=h\bigl(p(d-2)-c\bigr)-1\le p\bigl((d-2)h-1\bigr).
\]
Hence $p'\le p$. Since $d-2<d$ is odd, the minimality of $d$ gives
$
\chi'_{d-2}(H)\le p'\le p.
$

Thus, in either case, let $H_1,\ldots,H_p$ be the color classes of a $(p,d-2)$-edge coloring of $H$, adding empty color classes if necessary. For every $i\in[p]$, let $D_i:=H_i\cup F_i$. Then $\Delta(D_i)\le(d-2)+2=d$, so $\chi'_d(G)\le p$, a contradiction. Thus $d=3$.
\end{proof}
\begin{claim}\label{claim:c-one}
$c=1$.
\end{claim}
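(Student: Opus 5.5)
The plan is to argue by contradiction: assume $c\ge2$, strip off a suitable even factor of $G$ to obtain a regular multigraph whose $c$-value is strictly smaller, and then invoke the minimality of $c$ in the choice of $G$. By Claim~\ref{claim:d-three} and Claim~\ref{claim:main-reduction}, we have $d=3$, $\Delta=3p-c$ with $p-c$ odd, and $g_0\ge5$. The definition of $c$ gives $(c-1)g_0+2\le p\le cg_0+1$. Since $c\ge2$ and $g_0\ge 5$, we get $p\ge 2g_0+2$, so $\Delta$ is large.

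The first attempt is the construction from Claim~\ref{claim:d-three}. For $1\le j<c$, Claim~\ref{claim:even-factor} gives a $2j$-factor, which Theorem~\ref{pet} splits into $2$-factors $F_1,\dots,F_j$. Each $F_i$ will form a colour class of maximum degree $2\le3$. The remainder $H$ is $(3(p-j)-(c-j))$-regular, has odd girth $h\ge g_0$ (or is bipartite), and is to be coloured with $q:=p-j$ colours.
\begin{itemize}
\item If $H$ is bipartite, Theorem~\ref{konig} suffices.
\item Otherwise $p(H)\le q$ holds exactly when $q\le h(c-j)+1$.
\item In that case $\lceil(q-1)/h\rceil\le c-j<c$, so minimality of $c$ (with $d=3$ fixed) gives $\chi'_3(H)\le q$.
\end{itemize}
Adding $F_1,\dots,F_j$ as extra colour classes would then give a $(p,3)$-edge colouring of $G$. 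The catch is that the inequality $p-j\le g_0(c-j)+1$ combined with $p\ge(c-1)g_0+2$ forces $j\le1$, and even $j=1$ works only at the boundary $p=(c-1)g_0+2$. The reason is that each pure $2$-factor class wastes one unit of capacity.

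The refined plan removes this waste by giving each of those $j$ colours one extra matching, as in Lemma~\ref{lemma:absorption}. Concretely, I would remove a $2j$-factor split into $2$-factors $F_1,\dots,F_j$, together with $j$ further matchings $N_1,\dots,N_j$, so that every class $F_i\cup N_i$ has degree at most $3$. The matchings would be taken from the remainder, chosen so that after their removal the rest $H'$ has maximum degree at most $3(p-j)-(c-j)$. Then the class count gives a reduction to parameters $(q,c-j)$ that satisfies $p(H')\le q$ whenever $p\le g_0(c-j)+j+1$. Taking $j=c-1$, this target is $3(p-c+1)-1$, which is the $c=1$ situation with $q=p-c+1$.

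To extract such matchings, I would first pass to a regular supergraph with the same odd girth via Claim~\ref{claim:regularization}. I would then apply Theorem~\ref{gold} (through Lemma~\ref{cycle-matching}-type counting) or Theorem~\ref{gs} to an appropriate regular subgraph, mimicking the counting in Lemma~\ref{lemma:absorption} with $h=c$: the inequality $p\le cg_0+1$ is exactly what makes $\lceil (q-h)/(g_0-1)\rceil\le h$ type estimates go through.

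The main obstacle is this matching-extraction step. The degree of the remainder must drop uniformly by exactly the amount needed, and this must hold while the odd-girth hypothesis of Lemma~\ref{lemma:absorption} still applies. It would fail in the boundary case $p=cg_0+1$, where $p-c$ is odd and Lemma~\ref{lemma:absorption} does not apply directly. For that case I would first try to use a near-perfect structure, namely the ring graph $R(\Delta,g_0)$ and Lemma~\ref{ring-coloring}, to absorb the single leftover unit of capacity, as was done for the cut-edge in Claim~\ref{claim:few-bridges}.
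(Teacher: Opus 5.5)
\textbf{There is a genuine gap.} The step that actually lowers $c$ is never carried out, and your refined plan does not escape the obstruction you identified.

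You still aim for a remainder of degree $3(p-j)-(c-j)$ coloured with $q=p-j$ colours. That is exactly the first attempt, so you recover the same condition $p\le g_0(c-j)+j+1$. For $j=c-1$ it reads $p\le g_0+c$. This contradicts $p\ge(c-1)g_0+2$ whenever $c\ge3$, and for $c=2$ it holds only when $p=g_0+2$. (Incidentally, $c\ge2$ gives $p\ge g_0+2$, not $2g_0+2$.) If the $N_i$ really topped the classes up to degree $3$, the remainder would instead have degree $3(p-j)-c$. Making that version work would require extracting at least $r-1$ waste-free degree-$3$ classes from an odd-regular graph, where $r-1=p-(c-1)g_0-1$; nothing in the paper provides this.

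Your proposed way out is counting of the type in Lemma~\ref{lemma:absorption} with $h=c$, applied to $G$ or to a regular supergraph. This cannot work in any case, not only at the boundary $p=cg_0+1$. By Claim~\ref{claim:main-reduction}, $p-c$ is odd, so $\Delta=3p-c$ is always odd. Hence $G$ has no $2$-factor decomposition, and Lemma~\ref{lemma:absorption} never applies to $G$ itself. The ``matching-extraction'' step you leave open is therefore the whole content of the claim.

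\textbf{The missing idea} is to let the removed colours share a single unit of waste, rather than spending one unit each.
\begin{enumerate}
\item Write $p=(c-1)g_0+r$ with $2\le r\le g_0+1$. Since $p-c=(c-1)(g_0-1)+r-1$ is odd, $r$ is even.
\item Then $3r-4$ is a positive even integer less than $2\Delta/3$, so Claim~\ref{claim:even-factor} yields a $(3r-4)$-factor $F$.
\item Since $3r-4=3(r-1)-1$ and $r-1\le g_0\le g_0(F)$, Lemma~\ref{lemma:absorption} with $q=r-1$ and $h=1$ gives $\chi'_3(F)\le r-1$ (use K\"onig's theorem if $F$ is bipartite).
\item The rest $H=G-F$ is regular of degree $3q'-(c-1)$, where $q'=p-r+1=(c-1)g_0+1$. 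Hence $p(H)\le q'$ and $\lceil (p(H)-1)/g_0(H)\rceil\le c-1$.
\item Minimality of $c$ therefore colours $H$ with $q'$ colours (K\"onig again if $H$ is bipartite), for $(r-1)+q'=p$ colours in total.
\end{enumerate}

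Your $j=1$ computation is precisely the case $r=2$ of this argument. The generalisation replaces several pure $2$-factor classes by one even-degree factor, whose internal matchings come from Theorem~\ref{gold} inside Lemma~\ref{lemma:absorption} rather than from $G$. The parity of $r$ is what makes the removed degree even. The boundary $r=g_0+1$ needs no ring-graph treatment, since then $q=g_0=hg_0$ still meets the hypothesis of Lemma~\ref{lemma:absorption}.
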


\begin{proof}[Proof of Claim~\ref{claim:c-one}]
By Claim~\ref{claim:d-three}, $d=3$. Suppose that $c\ge2$. Since $c=\lceil(p-1)/g_0\rceil$, the identity $\lceil a/b\rceil=1+\lfloor(a-1)/b\rfloor$ for positive integers $a$ and $b$ implies that there is an integer $r$ with $2\le r\le g_0+1$ such that
$
p=(c-1)g_0+r.
$
Since
$
p-c=(c-1)(g_0-1)+r-1
$
is odd and $g_0-1$ is even, $r$ is even. By Claim~\ref{claim:main-reduction}, $\Delta=3p-c$, and since $p=(c-1)g_0+r$ and $2\le r\le g_0+1$, we have
$
2\Delta-3(3r-4)\ge3g_0+5>0.
$
Thus $3r-4<2\Delta/3$, and since $3r-4$ is even, Claim~\ref{claim:even-factor} gives a $(3r-4)$-factor $F$ in $G$. We first show that $\chi'_3(F)\le r-1$.

If $r=2$, then $F$ is a $2$-factor and $\chi'_3(F)=1=r-1$. Suppose that $r\ge4$. If $F$ is bipartite, then Theorem~\ref{konig}, followed by grouping the matching color classes into groups of at most three, gives
$
\chi'_3(F)\le\left\lceil({3r-4})/{3}\right\rceil=r-1.
$
Suppose that $F$ is non-bipartite. Since $1<r-1\le g_0\le g_0(F)$ and $3(r-1)-1$ is even, Lemma~\ref{lemma:absorption}, applied with $d=3$, $q=r-1$, and $h=1$, gives $\chi'_3(F)\le r-1$.

Let $H:=G-F$. Then $H$ is regular,
$
\Delta(H)=3(p-r+1)-(c-1),
$
and since $p=(c-1)g_0+r$, we have
$
p-r+1=(c-1)g_0+1.
$
We now show that $\chi'_3(H)\le p-r+1$.

If $H$ is bipartite, then Theorem~\ref{konig}, followed by grouping the matching color classes into groups of at most three, gives
$
\chi'_3(H)\le\left\lceil{\Delta(H)}/3\right\rceil\le p-r+1.
$
Suppose that $H$ is non-bipartite, and let $h:=g_0(H)\ge g_0$. Since
$
p-r+1=(c-1)g_0+1\le(c-1)h+1,
$
we have
\[
h\Delta(H)-1=h\bigl(3(p-r+1)-(c-1)\bigr)-1\le(p-r+1)(3h-1),
\]
and hence $p(H)\le p-r+1$. Moreover,
\[
\left\lceil\frac{p(H)-1}{h}\right\rceil\le\left\lceil\frac{p-r}{h}\right\rceil=\left\lceil\frac{(c-1)g_0}{h}\right\rceil\le c-1.
\]
If $\chi'_3(H)>p-r+1$, then $\chi'_3(H)>p(H)$, so $H$ is a counterexample with defect $3$ and
$
\left\lceil(p(H)-1)/{g_0(H)}\right\rceil<c,
$
contradicting the minimality of $c$. Hence $\chi'_3(H)\le p-r+1$.

Using disjoint color sets for $F$ and $H$ gives
$
\chi'_3(G)\le(r-1)+(p-r+1)=p,
$
a contradiction. Thus $c=1$.
\end{proof}

Now we complete the proof. By Claims~\ref{claim:d-three} and~\ref{claim:c-one}, we have
$
d=3$, $\Delta=3p-1$ and $c=\left\lceil({p-1})/{g_0}\right\rceil=1.$
Since $p\ge2$, we have
$
2\Delta-3(2p-2)=4>0,
$
and so $2p-2<2\Delta/3$. Hence Claim~\ref{claim:even-factor} gives a $(2p-2)$-factor $F$ in $G$. By repeatedly applying Theorem~\ref{pet}, let
$
F=F_1\cup\cdots\cup F_{p-1},
$
where each $F_i$ is a $2$-factor. Let $H:=G-F$. Then $H$ is $(p+1)$-regular. Since $p-c=p-1$ is odd by Claim~\ref{claim:main-reduction}, $p$ is even. Moreover, $\lceil(p-1)/g_0\rceil=1$ implies $g_0\ge p-1$, so $H$ has no odd cycle of length less than $p-1$. Applying Lemma~\ref{lemma:remainder} with $t=p$, we obtain a partition
\[
E(H)=E(M_1)\cup\cdots\cup E(M_{p-1})\cup E(D),
\]
where each $M_i$ is a matching and $\Delta(D)\le3$. For every $i\in[p-1]$, let
$
D_i:=F_i\cup M_i.
$
Then $\Delta(D_i)\le3$, and
\[
E(G)=E(D_1)\cup\cdots\cup E(D_{p-1})\cup E(D).
\]
Hence $G$ is $(p,3)$-edge colorable, contradicting that $G$ is a counterexample.
\end{proof}

As an application of Theorem~\ref{main}, we obtain the following case of Conjecture~\ref{dgs}.

\begin{corollary}\label{cor:dgs-concrete}
Let $G$ be a loopless non-bipartite multigraph, let $d>1$ be odd, and let $\Delta(G)=md+s$, where $m\ge0$ and $0\le s<d$. Let
$
\ell:=\max\{1,\Gamma_d(G)-m\}.
$
Then $\ell d-s>0$. If
\[
g_0(G)\ge\left\lceil\frac{m+\ell-1}{\ell d-s}\right\rceil,
\]
then
$
\chi'_d(G)\le m+\ell,
$
and hence Conjecture~\ref{dgs} holds. 
\end{corollary}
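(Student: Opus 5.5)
We deduce the corollary directly from Theorem~\ref{main}: we show that the hypothesis on $g_0(G)$ forces $p(G)\le m+\ell$.

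First, $\ell d-s>0$. Since $\ell\ge1$ and $s<d$, we have $\ell d-s\ge d-s>0$. Write $g:=g_0(G)$ and $\Delta:=\Delta(G)=md+s$. We need
\[
\left\lceil\frac{g\Delta-1}{dg-1}\right\rceil\le m+\ell,
\]
which is equivalent to $g\Delta-1\le(m+\ell)(dg-1)$. Substituting $\Delta=md+s$, the left side is $gmd+gs-1$ and the right side is $gmd+g\ell d-m-\ell$. So the inequality reduces to
\[
m+\ell-1\le g(\ell d-s).
\]
Because $\ell d-s>0$ and $g$ is an integer, this is equivalent to $g\ge(m+\ell-1)/(\ell d-s)$, and hence to $g\ge\lceil(m+\ell-1)/(\ell d-s)\rceil$, which is exactly the hypothesis. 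Theorem~\ref{main} then gives $\chi'_d(G)\le p(G)\le m+\ell$.

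It remains to check that $m+\ell$ is at most the right-hand side of Conjecture~\ref{dgs}. If $\ell=\Gamma_d(G)-m$, then $m+\ell=\Gamma_d(G)$ and we are done. Otherwise $\ell=1$ and $\Gamma_d(G)\le m+1$, so we must show $m+1\le\lceil(\Delta+1)/d\rceil$. This holds because $\Delta+1=md+s+1>md$ gives $\lceil(\Delta+1)/d\rceil\ge m+1$.

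The only delicate points are the sign of $\ell d-s$, which lets us divide and take the ceiling, and the case $\ell=1$. The algebra itself is routine.
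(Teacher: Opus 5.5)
Your proposal is correct and follows essentially the same route as the paper. Both rewrite the odd-girth hypothesis as $g_0(G)\Delta(G)-1\le(m+\ell)(dg_0(G)-1)$ and invoke Theorem~\ref{main}. The only difference is presentational: the paper notes $\lceil(\Delta(G)+1)/d\rceil=m+1$ to identify $m+\ell$ as exactly the conjectured bound, whereas you check $m+\ell$ is at most that bound by cases, and you also make explicit the positivity $\ell d-s\ge d-s>0$ that the paper leaves implicit.
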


\begin{proof}
Let
$
K:=\max\{\Gamma_d(G),\lceil(\Delta(G)+1)/d\rceil\}.
$
Since $\lceil(\Delta(G)+1)/d\rceil=m+1$, we have $K=m+\ell$ and
$
Kd-\Delta(G)=\ell d-s.
$
The stated odd-girth condition gives
\[
K-1=m+\ell-1\le g_0(G)(\ell d-s)=g_0(G)(Kd-\Delta(G)),
\]
and hence
$
g_0(G)\Delta(G)-1\le K(dg_0(G)-1).
$
Theorem~\ref{main} gives $\chi'_d(G)\le K=m+\ell$. 
\end{proof}

\end{document}